\documentclass[11pt]{amsart}

\usepackage{verbatim}
\usepackage{eucal,url,amssymb,stmaryrd,
enumerate,amscd,
}

\usepackage[hypertex]{hyperref}      

\usepackage{amsfonts}
\usepackage{amsmath,amsthm,amssymb,amscd,enumerate,eucal,url,stmaryrd}

\setlength{\marginparwidth}{.8in} \setlength{\textheight}{7.8in}
\setlength{\oddsidemargin}{0.35in}
\setlength{\evensidemargin}{0.35in} \setlength{\textwidth}{5.9in}
\setlength{\topmargin}{0.18in} \setlength{\headheight}{0.18in}
\setlength{\marginparwidth}{1.0in}

\numberwithin{equation}{section}

\newtheorem{thrm}{Theorem}[section]

\newtheorem{lemma}[thrm]{Lemma}

\newtheorem{prop}[thrm]{Proposition}

\newtheorem{cor}[thrm]{Corollary}

\newtheorem{rmrk}[thrm]{Remark}

\setlength{\oddsidemargin}{0.3in}

\setlength{\evensidemargin}{0.3in}

\setlength{\textwidth}{6.4in}

\setlength{\rightmargin}{0.7in}

\setlength{\leftmargin}{-0.5in}

\setlength{\textheight}{8.7in}

\begin{document}

\begin{abstract}
We construct explicit compact supersymmetric solutions with non-zero
field strength, non-flat instanton and constant dilaton to the
heterotic string equations in dimension five. We present a quadratic
condition on the curvature which is necessary and sufficient the
heterotic supersymmetry and the anomaly cancellation to imply the
heterotic equations of motion in dimension five. We supply compact
nilmanifold in dimension 5 satisfying the heterotic supersymmetry
equations with non-zero fluxes and constant dilaton which obeys  the
three-form Bianchi identity and solves the heterotic equations of
motion in dimension five.
\end{abstract}

\title[ Heterotic equations of motion  with non-zero fluxes and
constant dilaton] {Compact supersymmetric solutions of the heterotic
equations of motion  in dimension 5}
\date{\today}

\author{Marisa Fern\'andez}
\address[Fern\'andez]{Universidad del Pa\'{\i}s Vasco\\
Facultad de Ciencia y Tecnolog\'{\i}a, Departamento de Matem\'aticas\\
Apartado 644, 48080 Bilbao\\ Spain} \email{marisa.fernandez@ehu.es}

\author{Stefan Ivanov}
\address[Ivanov]{University of Sofia ``St. Kl. Ohridski"\\
Faculty of Mathematics and Informatics\\
Blvd. James Bourchier 5\\
1164 Sofia, Bulgaria} \email{ivanovsp@fmi.uni-sofia.bg}

\author{Luis Ugarte}
\address[Ugarte]{Departamento de Matem\'aticas\,-\,I.U.M.A.\\
Universidad de Zaragoza\\
Campus Plaza San Francisco\\
50009 Zaragoza, Spain} \email{ugarte@unizar.es}

\author{Raquel Villacampa}
\address[Villacampa]{Departamento de Matem\'aticas\,-\,I.U.M.A.\\
Universidad de Zaragoza\\
Campus Plaza San Francisco\\
50009 Zaragoza, Spain} \email{raquelvg@unizar.es}

\maketitle

\setcounter{tocdepth}{2} \tableofcontents

\section{Introduction. Field and Killing-spinor equations}

The bosonic fields of the ten-dimensional supergravity which arises
as low energy effective theory of the heterotic string are the
spacetime metric $g$, the NS three-form field strength $H$, the
dilaton $\phi$ and the gauge connection $A$ with curvature $F^A$.
The bosonic geometry considered in this paper is of the form
$R^{1,9-d}\times M^d$ where the bosonic fields are non-trivial only
on $M^d$, $d\leq 8$. We consider the two connections
\begin{equation}\label{pmcon}
\nabla^{\pm}=\nabla^g \pm \frac12 H,
\end{equation}
where $\nabla^g$ is the Levi-Civita connection of the Riemannian
metric $g$. Both connections preserve the metric, $\nabla^{\pm}g=0$ and
have totally skew-symmetric torsion $\pm H$, respectively.


The Green-Schwarz anomaly cancellation mechanism requires that the
three-form Bianchi identity receives an $\alpha'$ correction of the
form
\begin{equation}\label{acgen}
dH=\frac{\alpha'}48\pi^2(p_1(M^d)-p_1(E))=\frac{\alpha'}4
\Big(Tr(R\wedge R)-Tr(F^A\wedge F^A)\Big),
\end{equation}
where $p_1(M^d), p_1(E)$ are the first Pontrjagin forms of $M^d$
with respect to a connection $\nabla$ with curvature $R$ and the
vector bundle $E$ with connection $A$, respectively.

A class of heterotic-string backgrounds for which the Bianchi
identity of the three-form $H$ receives a correction of type
\eqref{acgen} are those with (2,0) world-volume supersymmetry. Such
models were considered in \cite{HuW}. The target-space geometry of
(2,0)-supersymmetric sigma models has been extensively investigated
in \cite{HuW,Str,HP1}. Recently, there is revived interest in these
models \cite{GKMW,CCDLMZ,GMPW,GMW,GPap} as string backgrounds and in
connection to heterotic-string compactifications with fluxes
\cite{Car1,BBDG,BBE,BBDP,y1,y2,y3,y4}.

In writing \eqref{acgen} there is a subtlety to the choice of
connection $\nabla$ on $M^d$ since anomalies can be cancelled
independently of the choice \cite{Hull}. Different connections
correspond to different regularization schemes in the
two-dimensional worldsheet non-linear sigma model. Hence the
background fields given for the particular choice of $\nabla$ must
be related to those for a different choice by a field redefinition
\cite{Sen}. Connections on $M^d$ proposed to investigate the
anomaly cancellation  \eqref{acgen} are $\nabla^g$ \cite{Str,GMW},
$\nabla^+$ \cite{CCDLMZ}, $\nabla^-$ \cite{Berg,Car1,GPap,II},
Chern connection $\nabla^c$ when $d=6$ \cite{Str,y1,y2,y3,y4}.

A heterotic geometry will preserve supersymmetry if and only if, in
10 dimensions, there exists at least one Majorana-Weyl spinor
$\epsilon$ such that the supersymmetry variations of the fermionic
fields vanish, i.e. the following Killing-spinor equations hold
\cite{Str}
\begin{gather} \nonumber
\delta_{\lambda}=\nabla_m\epsilon = \left(\nabla_m^g
+\frac{1}{4}H_{mnp}\Gamma^{np} \right)\epsilon=\nabla^+\epsilon=0;
\\\label{sup1} \delta_{\Psi}=\left(\Gamma^m\partial_m\phi
-\frac{1}{12}H_{mnp}\Gamma^{mnp} \right)\epsilon=(d\phi-\frac12H)\cdot\epsilon=0; \\
\nonumber
\delta_{\xi}=F^A_{mn}\Gamma^{mn}\epsilon=F^A\cdot\epsilon=0,
\end{gather}
where  $\lambda, \Psi, \xi$ are the gravitino, the dilatino and the
gaugino  fields, respectively and $\cdot$ means Clifford action of
forms on spinors.

The bosonic part of the ten-dimensional supergravity action in the
string frame is \cite{Berg}
\begin{gather}\label{action}
S=\frac{1}{2k^2}\int
d^{10}x\sqrt{-g}e^{-2\phi}\Big[Scal^g+4(\nabla^g\phi)^2-\frac{1}{2}|H|^2
-\frac{\alpha'}4\Big(Tr |F^A|^2)-Tr |R|^2\Big)\Big].
\end{gather}

The string frame field equations (the equations of motion induced
from the action \eqref{action}) of the heterotic string up to
two-loops \cite{HT} in sigma model perturbation theory are (we use
the notations in \cite{GPap})
\begin{gather}\nonumber
Ric^g_{ij}-\frac14H_{imn}H_j^{mn}+2\nabla^g_i\nabla^g_j\phi-\frac{\alpha'}4
\Big[(F^A)_{imns}(F^A)_j^{mns}-R_{imns}R_j^{mns}\Big]=0;\\\label{mot}
\nabla^g_i(e^{-2\phi}H^i_{jk})=0;\\\nonumber
\nabla^+_i(e^{-2\phi}(F^A)^i_j)=0,
\end{gather}
The field equation of the dilaton $\phi$ is implied from the first
two equations above.

We search for  solutions to lowest nontrivial order in $\alpha'$
of the equations of motion that follow from the bosonic action
which also preserves at least one supersymmetry.

It is known \cite{Bwit,GMPW} (\cite{GPap} for dimension $d=6$),
that the equations of motion of type I supergravity \eqref{mot}
with $R=0$ are automatically satisfied if one imposes, in addition
to the preserving supersymmetry equations \eqref{sup1}, the
three-form Bianchi identity \eqref{acgen} taken with respect to a
flat connection on $TM, R=0$.

A lot of effort had been done in dimension $d=6$ and compact
torsional solutions for the heterotic/type I string are known to
exist \cite{DRS,BBDG,BBE,CCDLMZ,GMW,y1,y2,y3,y4,DFG,FIUV}. In
dimensions $d=7$ and $d=8$ non-compact heterotic/type I  solutions
with non-zero fluxes to the equations of motion preserving at
least one supersymmetry are  constructed in
\cite{FNu,FN,HS,GNic,II} and the first compact torsional solutions
are presented recently in \cite{FIUV1}.

In dimension $d=5$, to the best of our knowledge, there are not
known any compact solution either to the supersymmetry equations
\eqref{sup1} or to the heterotic equations of motion \eqref{mot}
with non-zero fluxes. If the field strength vanishes, $H=0$, the
5-dimensional case reduces to dimension four since any five
dimensional Riemannian spin manifold admitting $\nabla^g$-parallel
spinor is reducible. Non compact solutions on circle bundle over
4-dimensional base endowed with a hyper K\"ahler metric (when the
4-dimensional metric is Egushi-Hanson, Taub-NUT, Atiyah-Hitchin)
have appeared in \cite{LV-P,GGMPR,SM,BBW,Pap}, the compact cases
are discussed  in \cite{GMW} where a cohomological obstruction is
presented.

The main goal of this paper is to construct explicit compact
supersymmetric valid solutions with non-zero field strength,
non-flat instanton and constant dilaton to the heterotic equations
of motion \eqref{mot} in dimension $d=5$.

It was known \cite{FI,FI2} that solutions to the first two Killing
spinor equations in dimension $d=5$ are quasi-Sasaki manifolds
with anti-self-dual exterior derivative of the almost contact form
and their special conformal transformations (see the precise
definitions below). In particular, Sasakian manifolds can not
solve the heterotic supersymmetry equations. In the case when the
quasi-Sasaki structure is regular the solutions to the first two
equations in \eqref{sup1} are $S^1$-bundles over a Calabi-Yau
4-manifold with anti-self-dual curvature 2-form. { This fact
was generalized recently in \cite{Pap} for supersymmetric
solutions of the heterotic string with holonomy of $\nabla^+$
contained in $SU(2)$, $Hol(\nabla^+)\subseteq SU(2)$. The
explicit compact five-dimensional} solutions we present in this paper are $S^1$-bundles
over a 4-torus.

In Theorem~\ref{shypo}, Theorem~\ref{shypo1} we give structure
equations of any solution to the first two Killing spinor equations
in \eqref{sup1}  in terms of exterior derivatives of an
$SU(2)$-structure in dimension five, a notion introduced in
\cite{ConS,GGMPR}, and express its Ricci tensor in terms of the
structure forms. Based on the analysis made in \cite{FI,FI2} we
also simplify the formula for the torsion tensor of the unique almost contact
metric connection with totally skew-symmetric torsion described in \cite{FI} { thus obtaining simple
formula for the NS three-form field strength}.

According to no-go (vanishing) theorems  (a consequence of the
equations of motion \cite{FGW,Bwit}; a consequence of the
supersymmetry \cite{IP1,IP2} for SU($n$)-case and \cite{GMW} for the
general case) there are no compact solutions with non-zero flux and
non-constant dilaton satisfying simultaneously the supersymmetry
equations \eqref{sup1} and the three-form  Bianchi identity
\eqref{acgen} if one takes flat connection on $TM$, more precisely a
connection with zero first Pontrjagin 4-form, $Tr(R\wedge R)=0$.
Therefore, in the compact case one necessarily has to have a
non-zero term $Tr(R\wedge R)$. However, under the presence of a
non-zero curvature 4-form $Tr(R\wedge R)$ the solution of the
supersymmetry equations \eqref{sup1} and the anomaly cancellation
condition \eqref{acgen} obeys the second and the third equations of
motion but does not always satisfy the Einstein equation of motion
(the first equation in \eqref{mot}). We give in Theorem \ref{thac} a
quadratic expression for $R$ which is necessary and sufficient
condition in order that \eqref{sup1} and \eqref{acgen} imply
\eqref{mot} in dimension $d=5$ based on the properties of the special
geometric structure induced from the first two equations in
\eqref{sup1}. (A similar condition in dimension six, seven and eight
we presented in \cite{FIUV,FIUV1}, respectively). In particular, if
$R$ is an $SU(2)$-instanton the supersymmetry equations together with
the anomaly cancellation condition imply the equations of motion in dimension 5. The latter
can also be seen following the considerations in the Appendix of
\cite{GMPW}.

We present in Theorem \ref{N(2,1)} compact nilmanifolds $N(2,1)_{a,b,c}$
in dimension five satisfying the heterotic supersymmetry equations
\eqref{sup1} with non-zero flux $H$, non-trivial instanton and
constant dilaton obeying the three-form Bianchi identity
\eqref{acgen} with curvature term $R=R^+$ which also solve the
heterotic equations of motion \eqref{mot}.  Our solutions {  depend on six real constants,} are
$S^1$-bundles over a 4-torus and  seem to be the first explicit
compact valid supersymmetric heterotic solutions with non-zero
flux and constant dilaton in dimension 5 satisfying the equations
of motion \eqref{sup1}. { These solutions can be viewed as
examples of  half-symmetric solutions, i.e. heterotic solutions
with 8 supersymmetries preserved. Our explicit solutions
also provide new compact half-symmetric solutions in the spirit of \cite{Pap},
(see also \cite{GLP,GPRS}) for which $Hol(\nabla^+)\subseteq
SU(2)$ and the group acting on the 10-dimensional space is
$T^{5,1}$ with the geometry splitting as $T^{4,1}\times N(2,1)_{a,b,c}$. }

Finally, in Proposition \ref{N(2,1)-LC} we show
 compact nilmanifolds in  dimension five
satisfying the heterotic supersymmetry equations \eqref{sup1} with non-zero fluxes and  the
three-form Bianchi identity \eqref{acgen} with curvature term
$R=R^g$.

\begin{rmrk}
We do not know compact non-regular solutions to the first two heterotic
Killing spinor equations, i.e. compact quasi-Sasaki manifolds of this kind
with non-closed orbits of the Reeb vector field, or, in view of
Theorem \ref{shypo} below,  compact 5-manifolds satisfying the
structure equations \eqref{solstr} with non-closed orbits of the Reeb vector field.
\end{rmrk}

{\bf Our conventions:} We  rise and lower the indices with the
metric and use the summation convention on repeated indices. For
example, $$B_{ijk}C^{ijk}=B_i^{jk}C^i_{jk}=B_{ijk}C_{ijk}=
\sum_{ijk=1}^nB_{ijk}C_{ijk}.$$

The connection 1-forms $\omega_{ji}$ of a metric  connection
$\nabla, \nabla g=0$ with respect to a local basis
$\{E_1,\ldots,E_n\}$ are given by
$$
\omega_{ji}(E_k) = g(\nabla_{E_k}E_j,E_i),
$$
since we write $\nabla_X E_j = \omega^s_j(X)\, E_s$.

The curvature 2-forms $\Omega^i_j$ of  $\nabla$ are given in terms
of the connection 1-forms $\omega^i_j$ by
$$
\Omega^i_j = d \omega^i_j + \omega^i_k\wedge\omega^k_j, \quad
 \Omega_{ji} = d \omega_{ji} +
\omega_{ki}\wedge\omega_{jk}, \quad R^l_{ijk}=\Omega^l_k(E_i,E_j),
\quad R_{ijkl}=R^s_{ijk}g_{ls}.
$$
and the first Pontrjagin class is represented by the 4-form
$$
p_1(\nabla)={1\over 8\pi^2} \sum_{1\leq i<j\leq d}
\Omega^i_j\wedge\Omega^i_j.
$$

\section{Geometry of the heterotic supersymmetry  equations}

Geometrically, the vanishing of the gravitino variation is
equivalent to the existence of a non-trivial real spinor parallel
with respect to the metric connection $\nabla^+$ with totally
skew- symmetric torsion $T=H$. The presence of $\nabla^+$-parallel
spinor leads to restriction of the holonomy group $Hol(\nabla^+)$
of the torsion connection $\nabla^+$.  A detailed analysis of the
induced geometries is carried out in \cite{GMW} and all possible
geometries (including non compact stabilizers) are investigated in
\cite{GLP,GPRS,GPR,Pap,P}.

The existence of $\nabla^+$-parallel spinor in dimension 5
determines an almost contact metric structure whose properties as
well as solutions to gravitino and dilatino Killing-spinor equations
are investigated in \cite{FI,FI2}.

We recall that an almost contact metric structure consists of an odd
dimensional manifold $M^{2k+1}$ equipped with a Riemannian metric
$g$, vector field $\xi$ of length one, its dual 1-form $\eta$ as
well as an endomorphism $\psi$ of the tangent bundle such that
\begin{equation}\label{acont1}
\psi(\xi)=0, \quad \psi^2=-id +\eta\otimes\xi, \quad
g(\psi.,\psi.)=g(.,.)-\eta\otimes\eta.
\end{equation}
In local coordinates \eqref{acont1} reads
\begin{equation*}
\psi^i_j\xi^j=0, \quad \psi^i_s\psi^s_j=-\delta^i_j +\eta_j\xi^i, \quad
g_{st}\psi^s_i\psi^t_j=g_{ij}-\eta_i\eta_j.
\end{equation*}
The Reeb vector field $\xi$ is determined by the equations
$\eta(\xi)=\eta_s\xi^s=1,\quad (\xi\lrcorner d\eta)_i=d\eta_{si}\xi^s=0$,
where $\lrcorner$ denotes the interior multiplication.
The fundamental form $F$ is defined by $$F(.,.)=g(.,\psi.), \quad F_{ij}=g_{is}\psi^s_j.$$
The Nijenhuis tensor $N$   of an almost
contact metric structure is given by
\begin{equation*}
 N=[\psi.,\psi.]+\psi^2[.,.] -\psi
[\psi.,.]-\psi[.,\psi.]  +d\eta\otimes\xi.
\end{equation*}
There are many special types of almost contact metric structures. We
introduce those which are relevant to our considerations:
\begin{enumerate}
\item[-] normal almost contact structures determined by the condition $N=0$;
\item[-] contact metric structures characterized by $d\eta=2F$;
\item[-] quasi-Sasaki structures, $N=0, dF=0$.
Consequently, $\xi$ is a Killing vector \cite{Bl};
\item[-] Sasaki structures, $N=0, d\eta=2F$.
Consequently, $\xi$ is a Killing vector \cite{Bl}.
\item[-]  the class of almost contact metric structures with totally skew-symmetric
Nijenhuis tensor and Killing vector field  $\xi$ introduced in
\cite{FI}.
\end{enumerate}

In dimension five any solution to the gravitino Killing spinor
equation, i.e. any parallel spinor with respect to a metric
connection with torsion 3-form defines an almost contact metric
structure $(g,\xi,\eta,\psi)$ via the formulas
$$\xi\cdot\epsilon=  \sqrt{-1} \cdot\epsilon ,
\qquad -2\psi X\cdot\epsilon +\xi\cdot X\cdot\epsilon = \sqrt{-1}X\cdot\epsilon,
$$
which is preserved by the torsion connection.

An almost contact metric structure admits a linear connection
$\nabla^+$ with torsion 3-form preserving the structure, i.e.
$\nabla^+ g=\nabla^+\xi=\nabla^+\psi=0$, if and only if the Nijenhuis
tensor is totally skew-symmetric, $g(N(X,Y),Z)=-g(N(X,Z),Y)$, and the vector field $\xi$ is a
Killing vector field \cite{FI}. In this case the torsion
connection is unique. The torsion $T$ of $\nabla^+$  is expressed by (\cite{FI}, Theorem 8.2)
\begin{equation}\label{toracon}
T=\eta\wedge d\eta+d^{\psi}F+N-\eta\wedge (\xi\lrcorner N),
\end{equation}
where $d^{\psi}F=-dF(\psi.,\psi.,\psi), \quad (d^{\psi}F)_{ijk}=-dF_{str}\psi^s_i\psi^t_j\psi^r_k$.

In particular one has
$ d\eta_{ij}=(\xi\lrcorner T)_{ij}=T_{sij}\xi^s, \quad (\xi\lrcorner d\eta)_i=T_{sti}\xi^s\xi^t=0$.

In fact, \eqref{toracon} simplifies since if the Nijenhuis tensor is totally skew-symmetric
then $\xi$ is a Killing vector field  exactly when (\cite{FI2}, Proposition 3.1 and its proof)
\begin{equation}\label{simpl}
(\xi\lrcorner dF)_{ij}=dF_{sij}\xi^s=0 \Leftrightarrow (\xi\lrcorner N)_{ij}=N_{sij}\xi^s=0.
\end{equation}
The proof of Lemma 8.3 in \cite{FI} also yields
$d\eta(.,.)=d\eta(\psi.,\psi.)$. In local coordinates the latter reads $d\eta_{ij}=d\eta_{st}\psi^s_i\psi^t_j$.

Now, Theorem 8.2 in \cite{FI} is formulated as follows
\begin{thrm}\cite{FI,FI2}
An almost contact metric structure admits a unique linear connection
$\nabla^+$ with torsion 3-form preserving the structure, i.e.
$\nabla^+ g=\nabla^+\xi=\nabla^+\psi=0$, if and only if the Nijenhuis
tensor is totally skew-symmetric and  $\xi\lrcorner N=0$.  The torsion $T$ of $\nabla^+$  is expressed by
\begin{equation}\label{toraconn}
T=\eta\wedge d\eta+d^{\psi}F+N,
\end{equation}
which expresses in local coordinates as
$T_{ijk}=(\eta\wedge d\eta)_{ijk}-dF_{str}\psi^s_i\psi^t_j\psi^r_k+N_{ijk}.$
\end{thrm}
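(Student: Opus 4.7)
The plan is to obtain this reformulation by combining the original Theorem 8.2 of \cite{FI} with the equivalence \eqref{simpl} that has already been recorded just above the statement. Nothing new has to be computed from scratch: the geometric content (existence, uniqueness, and the general torsion formula \eqref{toracon}) is entirely contained in \cite{FI}, while \eqref{simpl} comes from \cite{FI2}. My job is only to show that the pair of hypotheses ``$N$ totally skew-symmetric and $\xi\lrcorner N=0$'' cuts out exactly the same class as ``$N$ totally skew-symmetric and $\xi$ Killing'', and that under the strengthened hypothesis the torsion formula \eqref{toracon} collapses to \eqref{toraconn}.

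For the only-if direction, suppose the almost contact metric structure admits a metric connection $\nabla^+$ with totally skew-symmetric torsion satisfying $\nabla^+\xi=\nabla^+\psi=0$. By \cite[Theorem 8.2]{FI}, $N$ is totally skew-symmetric and $\xi$ is a Killing vector field. Invoking \eqref{simpl} then yields $\xi\lrcorner N=0$. Conversely, assume $N$ is totally skew-symmetric and $\xi\lrcorner N=0$. The equivalence \eqref{simpl} gives that $\xi$ is Killing, so \cite[Theorem 8.2]{FI} produces a unique torsion connection $\nabla^+$ with torsion
\begin{equation*}
T=\eta\wedge d\eta+d^{\psi}F+N-\eta\wedge(\xi\lrcorner N).
\end{equation*}
The hypothesis $\xi\lrcorner N=0$ makes the last term vanish, leaving $T=\eta\wedge d\eta+d^{\psi}F+N$, which is \eqref{toraconn}. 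Unraveling the definition of $d^{\psi}F$ as $(d^{\psi}F)_{ijk}=-dF_{str}\psi^s_i\psi^t_j\psi^r_k$ gives the local coordinate expression stated in the theorem.

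The only substantive point is the equivalence $\xi\lrcorner dF=0\Leftrightarrow \xi\lrcorner N=0$ used via \eqref{simpl}; this rests on the identity $d\eta_{ij}=d\eta_{st}\psi^s_i\psi^t_j$ noted just before the theorem and on index manipulations comparing the Killing equation for $\xi$ with the totally skew-symmetric Nijenhuis tensor, all of which are carried out in \cite[Proposition 3.1]{FI2}. Since these are cited, no further argument is needed here. The theorem is therefore an immediate consequence of \cite{FI} and \cite{FI2}, with the benefit that the hypothesis is now phrased entirely in terms of $N$, which is the form used in the subsequent structure-equation analysis.
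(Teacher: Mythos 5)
Your proposal is correct and follows exactly the route the paper takes: the paper itself obtains this formulation by quoting Theorem 8.2 of \cite{FI} with the torsion formula \eqref{toracon} and then invoking the equivalence \eqref{simpl} from \cite{FI2} to trade the Killing condition on $\xi$ for $\xi\lrcorner N=0$ and to kill the term $\eta\wedge(\xi\lrcorner N)$. Nothing is missing.
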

Since $\nabla^+\xi=0$ the restricted holonomy group $Hol(\nabla^+)$ of $\nabla^+$ containes in $U(k)$.
The spinor bundle $\Sigma$ of a $(2k+1)$-dimensional almost contact metric spin
manifold decomposes under the action of the fundamental 2-form
$F$ into the sum $\Sigma=\Sigma^0\oplus \dots \oplus\Sigma^k,$ $dim(\Sigma^r)=\Big({k\atop r}\Big)$.
The isotropy group of a spinor of type $\Sigma^0$ or $\Sigma^k$ coincides with  the subgroup
$SU(k)\subset U(k)$. Consequently, there exists locally a $\nabla^+$-parallel spinor of
type $\Sigma^0$ or $\Sigma^k$ exactly when $Hol(\nabla^+)\subset SU(k)$.
The equivalent curvature  condition  found in Proposition 9.1 \cite{FI} reads
\begin{equation}\label{su2}
R^+_{ijkl}F^{kl}=0 \Leftrightarrow Ric^+_{ij}=-\nabla^+_i\theta_j-\frac14\psi^s_jdT_{islm}F^{lm},
\end{equation}
where the Lee form $\theta$ is defined in \cite{FI2} by
\begin{equation}\label{thet5}
\theta_i=\frac12\psi^s_iT_{skl}F^{kl}=\frac12dF_{ikl}F^{kl}.
\end{equation}
Consequently, $\theta(\xi)=0$.

(Warning: note that the Lee form $\omega^{\nabla}$ defined in \cite{FI}
differs slightly from $\theta, \omega^{\nabla}(\psi.)=\theta(.)$).

It is shown in \cite{FI2} that solutions to the both gravitino and
dilatino Killing spinor equations are connected with a special type
conformal transformations of an almost contact metric structure
introduced in \cite{FI2} by
\begin{equation}\label{conf}
\psi':=\psi, \quad \eta':=\eta, \quad \xi':=\xi, \quad g':=e^{2f}g+(1-e^{2f})\eta\otimes\eta,
\end{equation}
where $f$ is a smooth function which is  constant along the integral curves of $\xi, df(\xi)=0$.
The new torsion $T'$ and Lee form $\theta'$ are given by
$$T'=T+(e^{2f}-1)d^{\psi}F+2e^{2f}d^{\psi}f\wedge F, \qquad \theta'=\theta+2df,
$$
where $d^{\psi}f=-df(\psi.)$.  In local coordinates we have  $(d^{\psi}f)_i=-df_s\psi^s_i.$

We restrict our attention to dimension five.
In  dimension five the Nijenhuis tensor is totally skew-symmetric exactly when it
vanishes \cite{CM}, i.e. the structure is normal.  In this case $\xi$ is
automatically a Killing vector field \cite{Bl}, the Lee form determines completely
the three form $dF$ due to \eqref{simpl}, $dF=\theta\wedge F$ and the dilatino
equation (the second equation in \eqref{sup1}) admits a solution exactly when
the normal almost contact manifold is special conformal to a quasi-Sasaki
5-manifold (\cite{FI2}, Theorem 5.5). If  the non-trivial spinor
$\epsilon\in \Sigma^1$ then the space is special conformal to the
standard Sasaki structure on the 5-dimensional Heisenberg group.

For a non-trivial
spinor $\epsilon\in \{\Sigma^0,\Sigma^2\}$ the dilatino equation
admits a solution if and only if  the next equalities hold (\cite{FI2}, Proposition 5.5)
\begin{equation}\label{dilz}
2d\phi=\theta, \quad  *_{\mathbb H}d\eta=-d\eta,
\end{equation}
where  $*_{\mathbb H}$ denote the Hodge operator acting  in the
4-dimensional orthogonal complement $\mathbb H$ of the vector $\xi$,
$\mathbb H=Ker \eta$. We call an $\mathbb H$-valued 2-form
satisfying the second equation of \eqref{dilz}  $\mathbb
H$-anti-self dual.

In this case the torsion (the NS three-form $H$) is given by
\begin{equation}\label{tsol5}
H=T=\eta\wedge d\eta +2d^{\psi}\phi\wedge F
\end{equation}
and the space is special conformal to a quasi-Sasaki 5-manifold with
$\mathbb H$-anti-self-dual 2 form $d\eta$.   In particular, there is
no solution on any Sasaki 5-manifold.

\begin{rmrk}\label{remim} The $\nabla^+$-parallel spinor  $\epsilon\in
\{\Sigma^0,\Sigma^2\}$ transforms under special conformal
transformations into a $(\nabla^+)'$-parallel spinor since $d\eta$
is $\mathbb H$-anti-self-dual [\cite{FI2}, Theorem 5.1].
Hence, a solution to the supersymmetry equations \eqref{sup1} in
dimension five reduces to solve \eqref{sup1} in the case of constant
dilaton.  Then any special conformal transformation gives again  a
solution to \eqref{sup1} and the anomaly cancellation condition
could be reduced to a highly non-linear PDE for a real function $f$.
\end{rmrk}

The simplest case is when  the normal almost contact structure is
regular, i.e. the orbit space $N^4=M^5/\xi$ is a smooth manifold.
Then $M^5$ is a principal $S^1$-bundle with $\mathbb
H$-anti-self-dual curvature form equal to $d\eta$ and any spinor
$\epsilon\in \{\Sigma^0,\Sigma^2\}$ solving the gravitino and
dilatino Killing spinor equations projects to $N^4$. Indeed, its Lie
derivative $\mathbb L_{\xi}\epsilon$ (see \cite{BG}), calculated in
\cite{FI2} is
$$\mathbb L_{\xi}\epsilon=\nabla^g_{\xi}\epsilon-\frac14d\eta\cdot\epsilon=
-\frac12d\eta\cdot\epsilon=0$$
since $d\eta$ is anti-self-dual and $\epsilon\in \{\Sigma^0,\Sigma^2\}$.
Now, Theorem 3.2 in \cite{FI2}  and \eqref{conf} yield

\begin{thrm}\cite{FI2}\label{dil} If  $(M^5,g,\eta,\xi,\psi)$ is a compact
regular almost contact metric manifold solving the gravitino and
dilatino Killing spinor equations for $\epsilon\in
\{\Sigma^0,\Sigma^2\}$ then $M^5$ is an $S^1$-bundle over a flat
torus or a K3-surface  with an $\mathbb H$-anti-self-dual curvature
equal to $d\eta$. The metric has the form
\begin{gather}\label{cys1}
g_5=e^{2f}g_{cy}+\eta\otimes\eta,
\end{gather}
where $g_{cy}$ is the Calabi-Yau metric on the 4-dimensional base
and $f$ is a smooth function on it.

The dilaton $\phi=2f$ depends only on the Calabi-Yau 4-manifold.

The torsion is $T=\eta\wedge d\eta-2e^{2f}(df\circ\psi)\wedge F$ and
the flux $H=T$.
\end{thrm}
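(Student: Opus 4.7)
The plan is to combine the special conformal reduction already established with a Boothby--Wang type descent to the orbit space. By \eqref{dilz}--\eqref{tsol5}, any five-dimensional almost contact metric manifold solving the gravitino and dilatino equations for $\epsilon\in\{\Sigma^0,\Sigma^2\}$ is special conformally equivalent to a quasi-Sasakian 5-manifold $(g,\eta,\xi,\psi)$ whose $d\eta$ is $\mathbb H$-anti-self-dual. I would therefore first analyze this quasi-Sasakian model and let the conformal factor $f$ reappear at the end via \eqref{conf}.

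In the regular case $\xi$ has closed orbits of constant length, so compactness makes $\pi\colon M^5\to N^4:=M^5/\xi$ a principal $S^1$-bundle. The relations $\xi\lrcorner\eta=1$ and $\xi\lrcorner d\eta=0$ exhibit $\eta$ as a connection 1-form with horizontal curvature $d\eta$, and $\mathbb H$-anti-self-duality of $d\eta$ is exactly anti-self-duality of this curvature on $N^4$. Since $\xi$ is Killing with $N=0$, both $\psi$ and $F$ are $S^1$-invariant and descend to an almost Hermitian structure $(g_{cy},J,\omega)$ on $N^4$; integrability of $J$ comes from normality and $d\omega=0$ from the quasi-Sasakian condition $dF=0$, so $N^4$ is K\"ahler.

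To upgrade K\"ahler to Calabi--Yau I would invoke the spinorial Lie derivative computation displayed just above the theorem, $\mathbb L_{\xi}\epsilon=-\tfrac12 d\eta\cdot\epsilon=0$, which is valid precisely because $d\eta$ is $\mathbb H$-anti-self-dual and $\epsilon\in\{\Sigma^0,\Sigma^2\}$. Hence $\epsilon$ projects to a spinor on $N^4$ parallel for the Levi--Civita connection of $g_{cy}$, forcing $\mathrm{Hol}(\nabla^{g_{cy}})\subseteq SU(2)$. The Calabi--Yau classification of compact K\"ahler surfaces with Ricci-flat metric then identifies $N^4$ as either a flat 4-torus or a K3 surface.

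Finally I would read off the explicit formulas. Writing \eqref{conf} as $g'=e^{2f}g+(1-e^{2f})\eta\otimes\eta$ and decomposing the quasi-Sasakian metric as $g=\pi^*g_{cy}+\eta\otimes\eta$ yields $g_5=e^{2f}\pi^*g_{cy}+\eta\otimes\eta$, which is \eqref{cys1}. The dilaton formula follows from \eqref{dilz} and the Lee form transformation $\theta'=\theta+2df$ applied to the quasi-Sasakian model, on which $\theta=0$; the torsion formula then follows by substituting $d^{\psi}\phi=-d\phi\circ\psi$ into \eqref{tsol5} and using $F'=e^{2f}F$. The main obstacle is the rigorous descent of the spinor: one must verify that $\nabla^+$-parallelism of $\epsilon$ on $M^5$ matches $\nabla^{g_{cy}}$-parallelism of its projection on $N^4$ with no surviving contribution from $d\eta$, which relies on the vanishing action of an anti-self-dual 2-form on spinors of type $\{\Sigma^0,\Sigma^2\}$ and is precisely the reason this spinor type is essential.
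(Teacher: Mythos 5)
Your proposal is correct and follows essentially the same route as the paper, which likewise reduces to the quasi-Sasakian model, uses regularity to get the $S^1$-bundle, descends the spinor via the Lie derivative computation $\mathbb L_{\xi}\epsilon=-\tfrac12 d\eta\cdot\epsilon=0$ (valid for $\epsilon\in\{\Sigma^0,\Sigma^2\}$ and $\mathbb H$-anti-self-dual $d\eta$), and then invokes Theorem 3.2 of \cite{FI2} together with the special conformal transformation \eqref{conf}; you merely spell out the details of that cited theorem. The only point to watch is the normalization of the dilaton factor, where the paper's stated $\phi=2f$ sits uneasily with $2d\phi=\theta'=2df$ from \eqref{dilz} and \eqref{thet5} — a convention issue inherited from the paper, not a gap in your argument.
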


\begin{rmrk} The third case of Theorem 3.2 in \cite{FI2}, namely $S^1$-bundle over
the Hopf surfaces, should be excluded since this solves the dilatino equation only locally.
\end{rmrk}

\subsection{The $SU(2)$-structure point of view.} The gravitino Killing
spinor equation, i.e. the $\nabla^+$-parallel spinor
$\epsilon\in\{\Sigma^0,\Sigma^2\}$ defines a reduction of the
structure group $SO(5)$ to $SU(2)$ which is described in terms of
forms by Conti and Salamon in \cite{ConS} (see also \cite{GGMPR}) as
follows: an $SU(2)$-structure on 5-dimensional manifold $M$ is
$(\eta,F=F_1,F_2,F_3)$, where $\eta$ is a $1$-form dual to $\xi$ via
the metric and $F_p, p=1,2,3$ are $2$-forms on $M$ satisfying
  \begin{equation}\label{defsu2}
  F_p\wedge F_q=\delta_{pq}v, \quad
  v\wedge\eta\not=0,
  \end{equation}
for some $4$-form $v$, and
  \begin{equation}\label{defsu2-2}
  X\lrcorner F_1=Y\lrcorner F_2\Rightarrow F_3(X,Y)\ge
  0.
  \end{equation}
Now $\mathbb H=Ker\eta$ and the 2-forms $F_p, p=1,2,3$ can be chosen
to form a basis of the $\mathbb H$-self-dual 2-forms \cite{ConS}.
The $SU(2)$-structure $(\eta,F=F_1,F_2,F_3)$ is $\nabla^+$-parallel,
$\nabla^+\eta=\nabla^+F_p=0, p=1,2,3$, since the defining spinor
$\epsilon$ is $\nabla^+$-parallel.

Involving the dilatino equation, the second equation in
\eqref{sup1}, we get that $d\eta$ is an $\mathbb H$-anti-self-dual
2-form. We  show that if the dilaton is constant then the 2-forms
$F_p, p=1,2,3$ are harmonic, i.e. closed and co-closed. We have
\begin{thrm}\label{shypo}
The first two equations in \eqref{sup1} admit a solution with
constant dilaton in dimension five exactly when there exists a five
dimensional manifold $M$ endowed with an $SU(2)$-structure
$(M,\eta,F=F_1,F_2,F_3)$ satisfying the structure equations:
\begin{equation}\label{solstr}
dF_p=0, \qquad *_{\mathbb H}d\eta = - d\eta.
\end{equation}
The Ricci tensors are given by
\begin{equation}\label{ricc5}
Ric^+_{mn}=-d\eta_{im}d\eta_{in}, \qquad
Ric^g_{mn}=-\frac12d\eta_{im}d\eta_{in}+\frac14|d\eta|^2\eta_m\eta_n.
\end{equation}

In particular, if $M$ is compact then the second and the third Betti
numbers $b_2(M),b_3(M)$ are greater than or equal to three, $b_2(M)\ge 3,
b_3(M)\ge 3$.
\end{thrm}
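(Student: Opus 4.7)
\emph{Proof proposal.}

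\emph{The biconditional.} The plan is to build on the Friedrich--Ivanov almost contact framework recalled above. For the forward direction, assume a solution with constant $\phi$; then \eqref{dilz} gives $\theta=0$ and $*_{\mathbb{H}}d\eta=-d\eta$, and \eqref{tsol5} reduces the torsion to $T=\eta\wedge d\eta$. Since $\epsilon\in\Sigma^0\cup\Sigma^2$ has isotropy $SU(2)$, the $\nabla^+$-parallel spinor produces a $\nabla^+$-parallel $SU(2)$-structure $(\eta,F_1=F,F_2,F_3)$ in the sense of \cite{ConS}. The vanishing $dF_p=0$ follows from the identity
\begin{equation*}
dF_p(X,Y,Z)=F_p(T(X,Y),Z)-F_p(T(X,Z),Y)+F_p(T(Y,Z),X),
\end{equation*}
valid for any $\nabla^+$-parallel 2-form, where $T(X,Y)$ denotes the vector-valued torsion. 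With $T=\eta\wedge d\eta$ one computes $T(X,Y)=d\eta(X,Y)\,\xi$ on $\mathbb{H}^{\otimes 2}$, so $dF_p$ restricted to $\mathbb{H}^{\otimes 3}$ vanishes because $F_p(\xi,\cdot)=0$; the mixed piece $dF_p(\xi,Y,Z)$ reduces to $g(J_p K Y,Z)-g(J_p K Z,Y)$, where $J_p$ and $K$ are the skew endomorphisms of $\mathbb{H}$ associated with $F_p$ and $d\eta$, and this antisymmetrization vanishes because $J_p$ ($\mathbb{H}$-self-dual) and $K$ ($\mathbb{H}$-anti-self-dual) commute in $\mathfrak{so}(4)=\mathfrak{so}(3)_+\oplus\mathfrak{so}(3)_-$, making $J_p K$ symmetric. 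Conversely, given $(\eta,F_1,F_2,F_3)$ satisfying \eqref{solstr}, I define the a.c.m.\ structure by $F_1(X,Y)=g(\psi X,Y)$; the identity $dF_1=\theta\wedge F_1$ for normal structures, combined with $\theta(\xi)=0$, forces $\theta=0$ from $dF_1=0$, while closedness of $F_2+iF_3$ and anti-self-duality of $d\eta$ deliver $N=0$ and the Killing property of $\xi$. The Friedrich--Ivanov theorem then produces a unique $\nabla^+$ preserving the full $SU(2)$-structure, so $Hol(\nabla^+)\subseteq SU(2)$ and a $\nabla^+$-parallel spinor $\epsilon\in\Sigma^0$ exists; the dilatino equation with constant dilaton is provided by \eqref{dilz}.

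\emph{Ricci formulas.} Applying \eqref{su2} with $\theta=0$ gives $Ric^+_{ij}=-\tfrac14\psi_j^s\,dT_{islm}F^{lm}$. Since $dT=d\eta\wedge d\eta$ is a 4-form on $\mathbb{H}$ assembled from the $\mathbb{H}$-anti-self-dual 2-form $d\eta$, an adapted-frame expansion collapses the contraction to $-d\eta_{im}d\eta_{in}$. The Levi-Civita Ricci is then recovered from the general relation
\begin{equation*}
Ric^g_{mn}=\tfrac12\bigl(Ric^+_{mn}+Ric^+_{nm}\bigr)+\tfrac14\,H_{mkl}H_n{}^{kl},
\end{equation*}
using the explicit components $H_{5ij}=d\eta_{ij}$: the horizontal part gives $-\tfrac12 d\eta_{im}d\eta_{in}$, the off-diagonal $Ric^g_{5m}$ with $m\in\mathbb{H}$ vanishes because $\xi\lrcorner d\eta=0$, and the vertical component along $\xi\otimes\xi$ equals $\tfrac14|d\eta|^2$.

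\emph{Betti numbers.} On a compact $M$ each $F_p$ is harmonic: $dF_p=0$ is \eqref{solstr}, and $d^*F_p=0$ follows from $*F_p=F_p\wedge\eta$ together with $d(F_p\wedge\eta)=-F_p\wedge d\eta=0$, the last equality by orthogonality of $\mathbb{H}$-self-dual and $\mathbb{H}$-anti-self-dual 2-forms. Analogously the three 3-forms $F_p\wedge\eta$ are closed and co-closed, since $*(F_p\wedge\eta)=F_p$. Because a harmonic exact form on a compact manifold is zero, any relation $\sum c_p[F_p]=0$ in $H^2$ forces $\sum c_p F_p=0$ pointwise, whence $c_p=0$ by pointwise linear independence of the $F_p$; the same argument for $F_p\wedge\eta$ yields $b_2(M),b_3(M)\geq 3$.

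The main obstacle will be the Ricci-tensor computation: conceptually direct but requiring a careful expansion of $d\eta\wedge d\eta$ in an adapted frame and a delicate contraction with $\psi$ and $F$ that uses $\mathbb{H}$-anti-self-duality of $d\eta$ and $\mathbb{H}$-self-duality of $F$ at each step. Everything else reduces to formal manipulations inside the $\nabla^+$-parallel $SU(2)$-framework and standard Hodge theory.
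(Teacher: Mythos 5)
Your forward direction, Ricci computation, and Betti-number argument all track the paper's own proof closely. The paper likewise derives $dF_p=0$ by combining $\nabla^+F_p=0$ with $T=\eta\wedge d\eta$, splitting into horizontal and mixed components and using exactly your algebraic observation that an $\mathbb H$-self-dual and an $\mathbb H$-anti-self-dual endomorphism of $\mathbb H$ commute (its equation \eqref{ans}); it obtains $Ric^+$ from \eqref{su2} (in the form \eqref{ric+ff}) with $\theta=2d\phi=0$ and then $Ric^g$ from \eqref{mo}; and it proves harmonicity of $F_p$ and $F_p\wedge\eta$ by the same $*_{\mathbb H}F_p=F_p$, $F_p\wedge d\eta=0$ computation. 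Those parts are sound, modulo the contraction you defer.

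The genuine gap is in the converse. You assert that ``the Friedrich--Ivanov theorem then produces a unique $\nabla^+$ preserving the full $SU(2)$-structure, so $Hol(\nabla^+)\subseteq SU(2)$.'' The Friedrich--Ivanov theorem (quoted as \eqref{toraconn}) only produces a connection preserving the almost contact metric structure $(g,\xi,\eta,\psi)$, i.e. it gives $Hol(\nabla^+)\subseteq U(2)$. Under $U(2)$ the complex line spanned by $F_2+\sqrt{-1}F_3$ transforms by the determinant character, so a priori $\nabla^+(F_2+\sqrt{-1}F_3)=\sqrt{-1}\,\alpha\otimes(F_2+\sqrt{-1}F_3)$ for some real $1$-form $\alpha$, and the existence of a $\nabla^+$-parallel spinor in $\Sigma^0\cup\Sigma^2$ --- which is what solving the gravitino equation requires --- is precisely the vanishing of $\alpha$, equivalently the curvature condition $R^+_{ijkl}F^{kl}=0$ of \eqref{su2}. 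Deducing $\alpha=0$ from $dF_2=dF_3=0$ is the substantive content of the converse and is not automatic, since the exterior derivative of a form that is parallel only up to $\alpha$ sees $\alpha$ only modulo the kernel of wedging with $F_2+\sqrt{-1}F_3$ and modulo torsion terms. The paper closes exactly this point by passing to $N=M\times\mathbb R$ with the $SU(3)$-structure \eqref{su3}, noting that \eqref{solstr} makes it balanced with closed complex volume form, and invoking Theorem 4.1 and Corollary 4.3 of \cite{II} to get $\nabla^+_N\Psi^{\pm}=0$ before descending to $M$; your parallel one-line claim that closedness of $F_2+\sqrt{-1}F_3$ ``delivers $N=0$'' is also justified in the paper only through this product construction ($d\Psi^{\pm}=0$ gives integrability on $N$, hence normality on $M$). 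You need either to reproduce that lift or to give a direct five-dimensional proof that $\alpha=0$; as written the converse assumes the conclusion at its key step.
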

\begin{proof}
Suppose $(M,\eta,F=F_1,F_2,F_3)$ is a solution to the first two
equations in \eqref{sup1} with constant dilaton. Then the structure
is quasi-Sasaki, $N=dF_1=0$, the torsion of $\nabla^+$ is given by
$T=\eta\wedge d\eta$ and $d\eta$ is $\mathbb H$-anti-self-dual by
the discussion in the previous subsection. We shall show that the
$\mathbb H$-self-dual forms $F_p$ are closed and therefore harmonic.
Let $(e_1,e_2,e_3,e_4,e_5=\xi)$ be an orthonormal basis. Then we
calculate for $X,Y,Z\in TM$ that
\begin{multline}\label{lcivf}
(\nabla^g_XF_p)(Y,Z)=(\nabla^+_XF_p)(Y,Z)
-\frac12\sum_{s=1}^5\Big[T(X,Y,e_s)F_p(e_s,Z)+T(X,Z,e_s)F_p(Y,e_s)\Big]
\\=-\frac12\sum_{s=1}^4\Big[(\eta\wedge
d\eta)(X,Y,e_s)F_p(e_s,Z)+(\eta\wedge d\eta)(X,Z,e_s)F_p(Y,e_s)\Big]
\end{multline}
since $\nabla^+F_p=0$ and $\xi\lrcorner F_p=0$. The equation
\eqref{lcivf} yields
\begin{itemize}
\item Let $X,Y,Z\in \mathbb H$. Then $\nabla^gF_p|_{\mathbb H}=0$.
Consequently, $dF_p|_{\mathbb H}=0$.
\item Let $X=\xi, Y,Z\in\mathbb H$. Then
\begin{equation}\label{ans}
(\nabla^g_{\xi}F_p)(Y,Z)=\frac12\sum_{s=1}^4\Big[F_p(Z,e_s)d\eta(Y,e_s)-F_p(Y,e_s)d\eta(Z,e_s)\Big]=0.
\end{equation}
The last equality is a pure algebraic consequence of the fact that
$F_p$ is $\mathbb H$-self-dual while $d\eta$ is $\mathbb
H$-anti-self-dual.
\item Let $X,Y\in\mathbb H,Z=\xi$. Then
\begin{equation}\label{asn1}
(\nabla^g_XF_p)(Y,\xi)=\frac12\sum_{s=1}^4F_p(Y,e_s)d\eta(X,e_s).
\end{equation}
\end{itemize}
Using \eqref{ans} and \eqref{asn1} we get
$dF_p(\xi,Y,Z)=2(\nabla^g_{\xi}F_p)(Y,Z)=0$, for $Y,Z\in\mathbb H$.
Hence, $dF_p=0$.

For the converse, we consider the Riemannian product
$N=M\times\mathbb R$ with the $SU(3)$-structure
$(\Omega,\Psi=\Psi^++\sqrt{-1}\Psi^-)$ with K\"ahler form $\Omega$
and complex volume form $\Psi$ defined by
\begin{gather}\label{su3}
\Omega=-F_1-\eta\wedge dt;\qquad \Psi^+=F_2\wedge\eta - F_3\wedge
dt;\qquad \Psi^-=F_3\wedge\eta + F_2\wedge dt.
\end{gather}
Using \eqref{solstr} we easily derive from \eqref{su3} that
$d(\Omega\wedge\Omega)=d\Psi^+=d\Psi^-=0$, i.e. it is a balanced
hermitian structure with holomorphic complex volume form. In
particular, the almost contact metric structure on $M$ is normal.
Applying Theorem 4.1 and Corollary 4.3 from \cite{II} we conclude
\begin{equation}\label{su31}
\nabla^+_N\Omega=\nabla^+_N\Psi^+=\nabla^+_N\Psi^-=0,
\end{equation}
where $\nabla^+_N$ is the Bismut-Strominger metric connection with
torsion 3-form $T_N$ given by
\begin{equation}\label{su3t}
T_N=-*_Nd\Omega=\eta\wedge d\eta
\end{equation}
where $*_N$ denotes the Hodge operator on $N$ and we have used the
first equation in \eqref{su3} and the fact that $d\eta$ is $\mathbb
H$-anti-self-dual.

Hence, the torsion $T_N$ does not depend on $\mathbb R$ and
therefore the connection $\nabla^+_N$ descends to $M$. Now,
\eqref{su31} yield $\nabla^+_N\eta=\nabla^+_NF_p=0, p=1,2,3$ and the
descended connection on $M$ coincides with $\nabla^+$ as two metric
connections with equal torsion must coincide.

The formulas for the Ricci tensors \eqref{ricc5} follow just taking
$df=0$ into \eqref{ricc5f} established below.

The last assertion follows from the fact that the three 2-forms
$F_p$ and the three 3-forms $F_p\wedge\eta$ are harmonic and
represent different cohomology classes. Indeed, the equation
$*_{\mathbb H}F_p=F_p$ implies $\delta
F_p=-*d*F_p=-*d(F_p\wedge\eta)=-*(dF_p\wedge\eta + F_p\wedge
d\eta)=0$ since $d\eta$ is $\mathbb H$-anti-self-dual.
\end{proof}

Combine Remark~\ref{remim} with Theorem~\ref{shypo} using
\eqref{conf} to derive
\begin{thrm}\label{shypo1}
The first two equations in \eqref{sup1} admit a solution  in
dimension five exactly when there exists a five dimensional manifold
$M$ endowed with an $SU(2)$-structure $(M,\eta,F=F_1,F_2,F_3)$
satisfying the structure equations:
\begin{equation}\label{solstr1}
dF_p=2df\wedge F_p, \qquad *_{\mathbb H}d\eta = - d\eta, \qquad
df(\xi)=0.
\end{equation}
The flux $H$ is given by
\begin{equation}\label{tor5f}
H=T=\eta\wedge d\eta +2d^{\psi}f\wedge F,
\end{equation}
where $\psi$ is the almost complex structure on $\mathbb H$ defined
by $g(X,\psi Y)=F(X,Y)$.

The dilaton $\phi$ is equal to $\phi=2f.$

The Ricci tensors are given by:
\begin{gather}\label{ricc5f}
Ric^+_{mn}=-d\eta_{im}d\eta_{in}+\nabla^g_mdf_n+\Delta_{\mathbb
H}fg_{mn}+df_i(d\eta_{im}\eta_n-d\eta_{in}\eta_m), \\\nonumber
Ric^g_{mn}=-\frac12d\eta_{im}d\eta_{in}+\frac14|d\eta|^2\eta_m\eta_n+
\nabla^g_mdf_n+\Delta_{\mathbb H}fg_{mn}-2df_mdf_n+2|df|^2g_{mn},
\end{gather}
where $\Delta_{\mathbb H}f:=\sum_{i=1}^4(\nabla^g_{e_i}df)e_i$ is
the horizontal subLaplacian and $|df|^2:=\sum_{i=1}^4df(e_i)^2$ is
the norm of the horizontal gradient.

In particular, if $M$ is compact then the second and the third Betti
numbers $b_2(M),b_3(M)$ are greater than or equal to three, $b_2(M)\ge 3,
b_3(M)\ge 3$.
\end{thrm}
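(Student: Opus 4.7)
The plan is to deduce Theorem~\ref{shypo1} directly from Theorem~\ref{shypo} by invoking Remark~\ref{remim}: every solution of the first two equations in \eqref{sup1} in dimension five is related to one with constant dilaton by a special conformal transformation \eqref{conf} driven by a smooth function $f$ satisfying $df(\xi)=0$. Thus I would start with a constant-dilaton solution $(\tilde g,\tilde\eta,\tilde\xi,\tilde\psi,\tilde F_1,\tilde F_2,\tilde F_3)$ furnished by Theorem~\ref{shypo}, for which $d\tilde F_p=0$ and $*_{\mathbb H}d\tilde\eta = -d\tilde\eta$, and then transfer these facts to the conformally rescaled structure $(g,\eta,\xi,\psi,F_p)$.

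The first step is to record how the structure forms transform under \eqref{conf}. Since $\eta'=\eta$, $\xi'=\xi$, and $\psi'=\psi$, one has $d\eta=d\tilde\eta$; because the Hodge star on $2$-forms on a $4$-manifold is conformally invariant and $g|_{\mathbb H}=e^{2f}\tilde g|_{\mathbb H}$, the anti-self-duality $*_{\mathbb H}d\eta=-d\eta$ persists. For the horizontal self-dual $2$-forms one obtains $F_p=e^{2f}\tilde F_p$, whence
\begin{equation*}
dF_p = 2e^{2f}df\wedge\tilde F_p + e^{2f}d\tilde F_p = 2df\wedge F_p,
\end{equation*}
and the constraint $df(\xi)=0$ is built into the definition of a special conformal transformation. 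The converse runs along the same lines: given \eqref{solstr1}, the forms $\tilde F_p:=e^{-2f}F_p$ yield a constant-dilaton structure to which Theorem~\ref{shypo} applies, and \eqref{conf} with function $f$ carries it back.

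For the flux formula I would use the torsion transformation rule displayed just after \eqref{conf}, namely
\begin{equation*}
T = \tilde T + (e^{2f}-1)d^{\tilde\psi}\tilde F + 2e^{2f}d^{\tilde\psi}f\wedge\tilde F.
\end{equation*}
Since $d\tilde F=0$ implies $d^{\tilde\psi}\tilde F=0$, and since $\tilde T=\eta\wedge d\eta$ by Theorem~\ref{shypo}, substituting $F=e^{2f}\tilde F$ and $\tilde\psi=\psi$ yields \eqref{tor5f}. The dilaton identification $\phi=2f$ follows from the constant initial $\tilde\phi$ combined with the transformation law $\theta'=\theta+2df$ and the dilatino relation $2d\phi=\theta$ from \eqref{dilz}.

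The Betti bounds are then immediate: since $b_2(M)$ and $b_3(M)$ are topological invariants, the inequalities $b_2,b_3\ge 3$ are inherited from the constant-dilaton structure via Theorem~\ref{shypo}. The main technical obstacle is the Ricci computation. I would substitute the explicit torsion \eqref{tor5f} into the standard identities relating $\mathrm{Ric}^g$, $\mathrm{Ric}^+$, the divergence $\nabla^g T$, and $|T|^2$, exploiting $\nabla^+F=0$, $\xi\lrcorner F_p=0$, and $df(\xi)=0$ to simplify the traces. The delicate bookkeeping concerns the mixed horizontal/$\xi$ components: the Hessian $\nabla^g df$, the horizontal sub-Laplacian $\Delta_{\mathbb H}f$, and the $|df|^2$ terms arise from differentiating $2d^\psi f\wedge F$, while the $d\eta_{im}d\eta_{in}$ pieces match those already computed in \eqref{ricc5} for the constant-dilaton case.
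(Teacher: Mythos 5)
Your reduction to the constant-dilaton case is exactly the paper's route: the equivalence of the structure equations, the flux formula \eqref{tor5f}, the dilaton identification and the Betti bounds are all obtained, as you do, by combining Remark~\ref{remim} with Theorem~\ref{shypo} through the special conformal transformation \eqref{conf}, and your bookkeeping of how $\eta$, $F_p$, $*_{\mathbb H}$ and $T$ transform is correct. The paper itself dismisses all of this in one sentence and states ``we need to prove only \eqref{ricc5f}''.

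That, however, is precisely where your proposal has a genuine gap. You propose to get \eqref{ricc5f} by ``substituting the explicit torsion into the standard identities relating $Ric^g$, $Ric^+$, the divergence $\nabla^g T$ and $|T|^2$''. Those identities, \eqref{ricg+} and \eqref{mo}, only express each Ricci tensor in terms of the other plus torsion terms; they cannot determine either one. The indispensable extra input is the curvature identity \eqref{su2} coming from the holonomy reduction $Hol(\nabla^+)\subset SU(2)$ (equivalently $R^+_{ijkl}F^{kl}=0$), which combined with the dilatino relation $\theta=2d\phi$ yields the closed formula \eqref{ric+ff},
\begin{equation*}
Ric^+_{mn}=-2\nabla^+_m d\phi_n-\tfrac14\,\psi^s_n\, dT_{msij}F^{ij}.
\end{equation*}
Only once this is in hand does inserting \eqref{tor5f} and evaluating the traces (the four identities \eqref{callc} in the paper, in particular
$2(dd^{\psi}f)_{ij}F^{ij}=-4\Delta_{\mathbb H}f+8|df|^2$ and
$T_{mij}T_{nij}=2d\eta_{mi}d\eta_{ni}+|d\eta|^2\eta_m\eta_n-8df_mdf_n+8|df|^2g_{mn}$)
produce the first line of \eqref{ricc5f}; the second line then follows from the first via \eqref{mo}. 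Your sketch names the right ingredients ($\nabla^+F=0$, $\xi\lrcorner F_p=0$, $df(\xi)=0$) but never isolates the identity \eqref{ric+ff} that actually computes $Ric^+$, and without it the ``delicate bookkeeping'' you defer cannot close.
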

\begin{proof}
We need to prove only \eqref{ricc5f}. We shall use \eqref{ric+ff}
and \eqref{mo} from below. Using \eqref{tor5f} and \eqref{solstr1},
we calculate
\begin{gather}-\frac14\psi^s_ndT_{msij}F^{ij}=-4d\eta_{im}d\eta_{in}+2(dd^{\psi}f)_{ij}F^{ij}-
8|df|^2g_{mn};\nonumber\\\nonumber
2(dd^{\psi}f)_{ij}F^{ij}=-4\nabla^+_idf_i+2df_i\psi^j_iT_{jst}F^{st}=-4\Delta_{\mathbb
H}f+8|df|^2;\\\label{callc}
df_iT_{imn}=df_i(d\eta_{im}\eta_n-d\eta_{in}\eta_m);\\
T_{mij}T_{nij}=2d\eta_{mi}d\eta_{ni}+|d\eta|^2\eta_m\eta_n-8df_mdf_n+8|df|^2g_{mn}.\nonumber
\end{gather}
We get the first equality in \eqref{ricc5f} from \eqref{ric+ff} and
the first three equalities in \eqref{callc}. The second equality
follows from the already proved first one, the fourth equality in
\eqref{callc} and \eqref{mo} below.
\end{proof}

We note that another proof of the second equality in \eqref{ricc5} and
\eqref{ricc5f} can be derived from the general formula of the Ricci
tensor for a general $SU(2)$-structure presented in \cite{BV}.

In addition to gravitino and dilatino Killing spinor equations, the
vanishing of the gaugino variation requires the 2-form $F^A$ to be
of instanton type (\cite{CDev,Str,HS,RC,DT,GMW}). In dimension five,
an $SU(2)$-instanton i.e the gauge field $A$ is a connection with
curvature 2-form $F^A\in su(2)$. The $SU(2)$-instanton condition can
be written in the form \cite{CDev,Str}
\begin{equation}\label{5inst}
F^A_{mn}=-\frac{1}{2}F^A_{st}(F\wedge F)^{st}\hspace{0mm}_{mn}.
\end{equation}

In this paper we consider compact regular $SU(2)$-manifolds in
dimension 5, more precisely the case of $S^1$-bundles over a flat
4-torus. We find compact solutions to \eqref{sup1} satisfying the
anomaly cancellation \eqref{acgen} with non-zero fluxes, constant
dilaton which also solves the heterotic equations of motion
\eqref{mot}.

\begin{rmrk}
It seems to be of particular interest whether there are compact non-regular (the
integral curves of the Reeb vector field $\xi$ are not closed)  quasi-Sasaki 5-manifolds
with anti-self-dual 2-form $d\eta$ whose Riemannian Ricci tensor is
given by \eqref{ricc5}, or equivalently,  non-regular $SU(2)$-
structures obeying \eqref{solstr} on compact 5-manifold. We do not
know any examples of this kind. Such examples might be relevant in
constructing compact heterotic solutions  in dimension six since the
construction of $\mathbb T^2$-bundles over Calabi-Yau surface
presented in \cite{GP} can be generalized to a circle bundle over
such an example solving automatically the first two equations in
\eqref{sup1}(see \cite{FIUV1}).
\end{rmrk}

\section{Heterotic supersymmetry and equations of motion}

It is known \cite{Bwit,GMPW} (\cite{GPap} for dimension 6), that
the equations of motion of type I supergravity \eqref{mot} with
$R=0$ are automatically satisfied if one imposes, in addition to
the preserving supersymmetry equations \eqref{sup1}, the
three-form Bianchi identity \eqref{acgen} taken with respect to a
flat connection on $TM, R=0$. However, the no-go theorems
\cite{FGW,Bwit,IP1,IP2,GMW} state that if even $Tr R\wedge R=0$
there are no compact solutions with non-zero flux $H$ and
non-constant dilaton.

In the presence of a curvature term $Tr R\wedge R\not=0$ a solution
of the supersymmetry equations \eqref{sup1} and the anomaly
cancellation condition \eqref{acgen} obeys the second and the third
equations in \eqref{mot} but does not always satisfy the Einstein
equation of motion (the first equation in \eqref{mot}). However if
the curvature $R$ is of instanton type \eqref{sup1} and
\eqref{acgen} imply \eqref{mot} which can also be seen follow the
considerations in the Appendix of \cite{GMPW}. We have

\begin{thrm}\label{thac}
The Einstein equation of motion (the first equation in \eqref{mot})
is a consequence of the heterotic Killing spinor equations
\eqref{sup1} and  the anomaly cancellation \eqref{acgen} if and only
if the next identity holds
\begin{equation}\label{supmot}
\frac1{2}\Big[R_{msij}R_{trij}+R_{mtij}R_{rsij}+R_{mrij}R_{stij}\Big]F^{tr}\psi^s_n
=R_{mstr}R_n^{str}.
\end{equation}
In particular, if $R$ is an instanton then  \eqref{supmot} holds.
\end{thrm}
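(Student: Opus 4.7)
The strategy is to substitute the explicit data supplied by the supersymmetry equations into the Einstein equation (the first equation of \eqref{mot}) and then invoke the anomaly cancellation to reduce what remains to an identity purely on the curvature $R$. By Theorem~\ref{shypo1}, the supersymmetry equations force $\phi=2f$, $Ric^g$ to take the form \eqref{ricc5f} and $H$ the form \eqref{tor5f}. I first compute $\tfrac14 H_{imn}H_j^{mn}$ directly from \eqref{tor5f} and combine it with $Ric^g_{ij}$ and $4\nabla^g_i df_j$. A substantial portion of the resulting expression cancels by virtue of the structure equations \eqref{solstr1}: the $\mathbb{H}$-anti-self-duality of $d\eta$, the identity $dF_p=2df\wedge F_p$, and the horizontal condition $df(\xi)=0$. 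What survives on the left-hand side of the Einstein equation is a specific contraction of the four-form $dH$ with the $SU(2)$-structure forms $F$ and $\psi$.

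At this stage the anomaly cancellation \eqref{acgen} enters to replace $dH$ by $\tfrac{\alpha'}{4}\bigl(\operatorname{Tr}(R\wedge R)-\operatorname{Tr}(F^A\wedge F^A)\bigr)$, turning the residual left-hand side into the $(F,\psi)$-contraction of this expression. Equating with the curvature term on the right-hand side of the Einstein equation, one is left with the demand that the $(F,\psi)$-contraction of $\operatorname{Tr}(\Phi\wedge\Phi)$ should agree with the direct double contraction $\Phi_{imns}\Phi_j^{mns}$, once with $\Phi=F^A$ and once with $\Phi=R$. For $\Phi=F^A$ this is automatic: the gaugino equation makes $F^A$ an $SU(2)$-instanton satisfying \eqref{5inst}, and expanding $(F\wedge F)^{st}{}_{mn}$ with the help of the almost-contact identity \eqref{acont1} shows that the instanton condition on a $2$-form is precisely equivalent to that contraction identity; hence the gauge contribution cancels identically. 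For $\Phi=R$ the same equivalence yields exactly \eqref{supmot}, which is therefore the missing necessary and sufficient condition. The ``in particular'' clause is immediate: if $R$ is itself an $SU(2)$-instanton, then \eqref{supmot} follows from the same algebraic equivalence.

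The main obstacle is the bookkeeping in the first step: one must carefully separate the horizontal and vertical contributions inside $\nabla^g_i\nabla^g_j\phi$ and inside the various pieces of $H_{imn}H_j^{mn}$ arising from the two terms $\eta\wedge d\eta$ and $2d^{\psi}f\wedge F$ of \eqref{tor5f}, and verify that after use of \eqref{solstr1} nothing but the $(F,\psi)$-contraction of $dH$ survives on the left-hand side. Once this reduction is achieved, the remaining steps are purely algebraic and are forced by the instanton identity \eqref{5inst} together with \eqref{acont1}.
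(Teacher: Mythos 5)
Your proposal is correct and follows essentially the same route as the paper: both arguments express $Ric^g$ through the torsion-connection Ricci formula \eqref{ric+ff} combined with \eqref{mo}, observe that the $\tfrac14 H_{imn}H_j^{mn}$ and dilaton terms cancel in the first equation of \eqref{mot}, identify the surviving term as the $(F,\psi)$-contraction of $dH$, and then invoke the anomaly cancellation \eqref{acgen}. The only difference is one of presentation — you make explicit that the gauge contribution drops out because the gaugino equation forces $F^A$ to satisfy the instanton identity \eqref{5inst}, a step the paper leaves implicit in its one-line conclusion.
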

\begin{proof}

The Ricci tensors are connected by (see e.g. \cite{FI})
\begin{gather}\label{ricg+}
Ric^g_{mn}=Ric^+_{mn}+\frac14T_{mst}T_n^{st}-\frac12\nabla^+_sT^s_{mn},
\qquad
Ric^+_{mn}-Ric^+_{nm}=\nabla^+_sT^s_{mn}=\nabla^g_sT^s_{mn},\\\label{mo}
Ric^g_{mn}=\frac12(Ric^+_{mn}+Ric^+_{nm})+\frac14T_{mst}T_n^{st}.
\end{gather}
In view of \eqref{dilz}, the Ricci tensor described in \eqref{su2} is given by

\begin{equation}\label{ric+ff}
Ric^+_{mn}=-2\nabla^+_md\phi_n-\frac14\psi^s_ndT_{msij}F^{ij}=
-2\nabla^g_md\phi_n+d\phi_sT^s_{mn}-\frac14\psi^s_ndT_{msij}F^{ij}.
\end{equation}
Substitute \eqref{ric+ff} into \eqref{mo}, insert the result into
the first equation of \eqref{mot} and use the anomaly cancellation
\eqref{acgen} to conclude the assertion.
\end{proof}
It is shown in \cite{I2} that the curvature of $R^+$ satisfies the
identity $R^+_{ijkl}=R^+_{klij}$ if and only if
$\nabla^+_iT_{jkl}$ is a four form. Now Theorem \ref{thac} yields

\begin{cor}\label{thacp}
Suppose the torsion 3-form is $\nabla^+$-parallel, $\nabla^+_iT_{jkl}=0$. The  equations of motion
\eqref{mot} with respect to  the curvature $R^+$ of the (+)-connection are consequences of the
heterotic Killing spinor equations \eqref{sup1} and  the anomaly
cancellation \eqref{acgen}.
\end{cor}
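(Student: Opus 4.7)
The plan is to apply the final (``in particular'') clause of Theorem \ref{thac} to $R=R^+$ by verifying that $R^+$ is an $SU(2)$-instanton under the hypothesis $\nabla^+T=0$. Once the instanton property is established, the identity \eqref{supmot} holds automatically for $R=R^+$, so that Theorem \ref{thac} implies the first (Einstein) equation in \eqref{mot}; the remaining two equations in \eqref{mot} then follow from \eqref{sup1} together with the anomaly cancellation \eqref{acgen} by the general argument recalled at the beginning of this section.

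To establish the instanton property I would proceed in two steps. First, since $\nabla^+T=0$ is a fortiori a 4-form, the criterion of \cite{I2} recalled just above the corollary yields the pair symmetry
\[
R^+_{ijkl}=R^+_{klij}.
\]
Second, the gravitino equation in \eqref{sup1} provides a $\nabla^+$-parallel spinor and the dilatino equation forces it, in dimension five, to be of type $\Sigma^0$ or $\Sigma^2$ (as in the discussion leading to \eqref{dilz}); this is equivalent to $Hol(\nabla^+)\subseteq SU(2)$ and, by \eqref{su2}, to the vanishing
\[
R^+_{ij,kl}F^{kl}=0.
\]
Geometrically, this says that $R^+(X,Y)$ is an $\mathbb H$-anti-self-dual 2-form in its last two indices. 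Applying the pair symmetry to transpose the two index pairs, the same anti-self-duality also holds in the first two indices, $F^{ij}R^+_{ij,kl}=0$, which together with $\nabla^+\xi=0$ amounts exactly to the $SU(2)$-instanton condition \eqref{5inst} for $R^+$.

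With the instanton property in hand the proof is completed by citing the ``in particular'' clause of Theorem \ref{thac}. The one delicate point worth highlighting is the asymmetric form of \eqref{5inst}: it constrains the ``base'' pair of indices of $R^+$, whereas $Hol(\nabla^+)\subseteq SU(2)$ directly controls only the ``fibre'' pair. It is precisely the pair symmetry furnished by $\nabla^+T=0$ that transports the condition across the two pairs and thereby brings the instanton clause of Theorem \ref{thac} to bear on $R=R^+$.
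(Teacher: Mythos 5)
Your argument is correct and is exactly the route the paper intends: the remark preceding the corollary (citing \cite{I2}) supplies the pair symmetry $R^+_{ijkl}=R^+_{klij}$ from $\nabla^+T=0$, and combining it with $R^+_{ijkl}F^{kl}=0$ from \eqref{su2} (a consequence of $Hol(\nabla^+)\subseteq SU(2)$) shows $R^+$ is an $SU(2)$-instanton, so the ``in particular'' clause of Theorem \ref{thac} applies. The paper leaves these steps implicit, and your observation about transporting the anti-self-duality from the fibre pair to the base pair of indices is precisely the point being used.
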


\subsection{Heterotic supersymmetric equations of motion with constant
dilaton}\label{cdil}

In the case when the dilaton is constant we arrive to the
following problems:

We look for a compact 5-manifold $M$ with an $SU(2)$-structure
$(\eta,F_p), p=1,2,3$ which satisfies the following conditions
\begin{enumerate}
\item[a).] Gravitino and dilatino Killing spinor equations
(the first two equations in \eqref{sup1}): the forms $F_p$ are closed
and $d\eta$ is $\mathbb H$-anti-self-dual.
\item[b).] Gaugino Killing spinor equation (the third equation in \eqref{sup1}).
Look for a  vector bundle $E$ of rank $r$ over $M$ equipped with an
$SU(2)$-instanton, i.e. a connection $A$ with curvature 2-form
$\Omega^A$ satisfying \eqref{5inst} written in the form
\begin{equation}\label{25}
(\Omega^A)^m_n(\psi E_k,\psi
E_l)=(\Omega^A)^m_n(E_k,E_l),\qquad
\sum_{k=1}^5(\Omega^A)^m_n(E_k,\psi E_k)=0,
\end{equation}
where $\{E_1,\ldots,E_5=\xi\}$ is an orthonormal basis on $M$ and
$\psi$ is the almost complex structure on $\mathbb H$ defined by
$g(X,\psi Y)=F_1(X,Y), \quad \psi\xi=0$.
\item[c).] Anomaly cancellation condition:

\begin{equation}\label{ac5}
dH=dT=d\eta\wedge
d\eta=\frac{\alpha'}48\pi^2\Big(p_1(M)-p_1(A)\Big), \qquad
\alpha'>0.
\end{equation}
\item[d).] The first Pontrjagin form $p_1(M)$ satisfies equation
\eqref{supmot}.
\end{enumerate}

\section{Explicit compact solutions}

In this section we give an explicit family of compact solutions to
the heterotic supersymmetric equations of motion with constant
dilaton, based on a quotient of the 5-dimensional generalized
Heisenberg group $H(2,1)$.

First, let us recall that $H(2,1)$ is the nilpotent Lie group
consisting of the matrices of the form
$$
H(2,1)=\left\{ \left( \begin{array}{cccc}
1 & x_1 & x_2 & z \\
0 & 1 & 0 & y_1 \\
0 & 0 & 1 & y_2 \\
0 & 0 & 0 & 1
\end{array} \right)
\mid x_i,y_i, z \in \mathbb{R}, 1\leq i\leq 2 \right\}.
$$
For each triple $(a,b,c)\in \mathbb{R}^3$ such that $a^2+b^2\not=0$,
we consider the basis of left invariant 1-forms $e^1,\ldots,e^5$ on
$H(2,1)$ given by
$$
\begin{array}{rll}
\!\!\!&\!\!\! e^1= (a^2+b^2+c^2)\, d x_1,\quad & e^2= a\, d y_1 +
b\left( 1+{c^2\over a^2+b^2} \right) d x_2 - a c\, d y_2,\\[10pt]
\!\!\!&\!\!\! e^3= b\, d y_1 - a \left( 1+{c^2\over a^2+b^2} \right)
d x_2 -b c\, d y_2, \quad & e^4= c\, d y_1 +(a^2+b^2)\, dy_2,\\[12pt]
\!\!\!&\!\!\! e^5 = (a^2+b^2+c^2)^2 (x_1 dy_1 + x_2 dy_2 - dz) . &
\end{array}
$$
In terms of this basis the structure equations of the Lie algebra
${\mathfrak h}(2,1)$ of $H(2,1)$ become
\begin{equation}\label{diff-heisenberg}
\left\{
\begin{array}{rcl}
d e^1 \!\!&\!\!=\!\!&\!\! d e^2 = d e^3 = d e^4 =0,\\[7pt]
d e^5 \!\!&\!\!=\!\!&\!\! a (e^{12} - e^{34})+ b (e^{13} + e^{24}) +
c(e^{14} - e^{23}).
\end{array}
\right.
\end{equation}
Notice that these equations also correspond to ${\mathfrak h}(2,1)$
when $a=b=0$ and $c\not=0$, so (\ref{diff-heisenberg}) are valid for
any triple $(a,b,c)\in \mathbb{R}^3-\{(0,0,0)\}$. It is immediate to
check that the $SU(2)$-structure $(\eta,F_1,F_2,F_3)$ given by
\begin{equation}\label{structure}
F_1= e^{12} + e^{34},\quad\quad F_2= e^{13} + e^{42},\quad\quad F_3=e^{14} + e^{23}, \quad\quad \eta=e^5.
\end{equation}
satisfies~(\ref{solstr}). In view of Theorem~\ref{shypo}, this
family provides explicit solutions (with constant dilaton) to the
first two equations in \eqref{sup1}.

Next we prove that this is the unique family of left invariant
solutions (with constant dilaton) to the first two equations in
\eqref{sup1} on a 5-dimensional Lie group.

\begin{thrm}\label{left-inv-solutions}
Let $\mathfrak g$ be a Lie algebra of dimension $5$ with an
$SU(2)$-structure $(\eta,F_1,F_2,F_3)$ satisfying
$$dF_1=0, \qquad dF_2=0, \qquad dF_3=0, \qquad
*_{\mathbb H}d\eta = - d\eta\not= 0.
$$
Then, $\mathfrak g$ is isomorphic to the Lie algebra ${\mathfrak
h}(2,1)$. Moreover, there is a basis $e^1,\ldots,e^5$ for
${\mathfrak g}^*$ satisfying $(\ref{diff-heisenberg})$ for some
$a,b,c\in \mathbb{R}$ with $a^2+b^2+c^2\not=0$ and such that the
$SU(2)$-structure $(\eta,F_1,F_2,F_3)$ expresses as
$(\ref{structure})$.
\end{thrm}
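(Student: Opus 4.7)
The plan is to work in a Conti--Salamon canonical basis adapted to the $SU(2)$-structure and extract the Lie algebra structure from the closure conditions on the $F_p$, the anti-self-duality of $d\eta$, and the Jacobi identity. Choose a basis $e^1,\ldots,e^5$ of $\mathfrak{g}^*$ with $\eta = e^5$ and the $F_p$ in canonical form \eqref{structure}. Since $d\eta$ is horizontal (i.e., $\xi\lrcorner d\eta = 0$) and $\mathbb{H}$-anti-self-dual, and the space of $\mathbb{H}$-anti-self-dual 2-forms is spanned by $e^{12}-e^{34}$, $e^{13}+e^{24}$, $e^{14}-e^{23}$, we get
\[
de^5 = a(e^{12}-e^{34}) + b(e^{13}+e^{24}) + c(e^{14}-e^{23})
\]
for unique $a,b,c\in\mathbb{R}$, with $(a,b,c)\neq(0,0,0)$ since $d\eta\neq 0$. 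For $i=1,\ldots,4$ decompose $de^i = \alpha^i + \beta^i\wedge e^5$ with $\alpha^i\in\Lambda^2\mathbb{H}^*$ and $\beta^i\in\mathbb{H}^*$; the goal is to show $\alpha^i=\beta^i=0$.

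Extracting the $e^5$-components of $dF_p=0$ for $p=1,2,3$ yields $18$ linear equations on the $16$ coefficients of $\{\beta^i\}$; solving them reduces the $\beta^i$ to a $3$-parameter family
\[
\beta^1 = z e^2 - y e^3 - x e^4,\qquad \beta^2 = -z e^1 + x e^3 - y e^4,
\]
\[
\beta^3 = y e^1 - x e^2 - z e^4,\qquad \beta^4 = x e^1 + y e^2 + z e^3,\qquad x,y,z\in\mathbb{R}.
\]
The Jacobi identity $d^2 e^5 = 0$, rewritten via $dF_p=0$ as $a\,d(e^{12}) + b\,d(e^{13}) + c\,d(e^{14}) = 0$, produces in its vertical component the relations $bx=\pm cy$, $ax+cz=0$, $ay+bz=0$. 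Combined with $(a,b,c)\neq 0$, and with the identities $d^2e^i=0$ in the degenerate subcases when some of $a,b,c$ vanish, these force $x=y=z=0$. Hence $\beta^i=0$ for all $i$, so $\xi$ is central in $\mathfrak{g}$.

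Consequently $\mathfrak{g}$ is a central extension by $\mathbb{R}\xi$ of the 4-dimensional quotient $\mathfrak{h}=\mathfrak{g}/\langle\xi\rangle$, and the three closed self-dual 2-forms $F_p$ descend to $\mathfrak{h}$, making it into a left-invariant hyper-K\"ahler Lie algebra. The horizontal parts of $dF_p=0$ give $12$ linear equations on the $24$ coefficients of the $\alpha^i$; combined with the Jacobi identities on $\mathfrak{h}$ they force $\mathfrak{h}$ to be abelian and hence all $\alpha^i=0$. (One can either carry out this elimination directly in the $dF_p=0$ and quadratic Jacobi systems, or appeal to the fact that any left-invariant hyper-K\"ahler metric on a 4-dimensional Lie group is flat, so the Lie algebra is $\mathbb{R}^4$.) The resulting structure equations $de^i=0$ for $i\le 4$ together with $de^5$ as above recover \eqref{diff-heisenberg} exactly, so $\mathfrak{g}\cong{\mathfrak h}(2,1)$ and the $SU(2)$-structure coincides with \eqref{structure} by construction. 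The main obstacle is this final step: showing that no non-trivial $\alpha^i$ survives the combined linear $dF_p=0$ equations and the quadratic Jacobi constraints on $\mathfrak{h}$.
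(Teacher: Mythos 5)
Your overall strategy coincides with the paper's: put the $SU(2)$-structure in the canonical frame \eqref{structure}, use the vertical ($e^5$-) components of $dF_p=0$ to cut the vertical structure constants down to a $3$-parameter family, use the Jacobi identity to make $\xi$ central, and then identify the $4$-dimensional quotient as an abelian hyper-K\"ahler Lie algebra. The first and last steps match the paper: your $(x,y,z)$ are exactly the parameters surviving the relations \eqref{ces}, and the paper disposes of the horizontal coefficients by the same hyper-K\"ahler argument you invoke.

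There is, however, a genuine gap in the middle step. The vertical component of $d^2e^5=0$ does \emph{not} force $x=y=z=0$: with your parametrization the three independent relations are $bx-cy=ax+cz=ay+bz=0$, whose solution space is the line $(x,y,z)=t(-c,-b,a)$ for \emph{every} $(a,b,c)\neq(0,0,0)$ --- including the case $abc\neq 0$, so this is not an issue confined to degenerate subcases. This residual one-parameter family is precisely the $\lambda$ of \eqref{lambda} in the paper, and eliminating it is the real core of the proof: it requires coupling the vertical coefficients to the horizontal ones (the coefficients of your $\alpha^i$, the $a_{ij},b_{ij}$ of \eqref{gen-dif}). The paper shows that if $\lambda\neq0$ then the Jacobi components \eqref{jac-1}--\eqref{jac-3} force all $a_{ij}=0$, further Jacobi components force all $b_{ij}=0$, and then $P^3_{124}=\lambda(a^2+b^2+c^2)\neq0$ violates the Jacobi identity. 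Your proposal supplies no argument of this kind, and the step you flag as the main obstacle (killing the $\alpha^i$ once $\xi$ is central) is in fact the easier one. A secondary quibble: ``flat left-invariant metric $\Rightarrow$ abelian'' is false as stated (e.g.\ $\mathfrak{e}(2)\oplus\mathbb{R}$ carries a flat left-invariant metric); what is needed, and what the paper invokes, is that a left-invariant hyper-K\"ahler structure on a $4$-dimensional Lie algebra forces it to be abelian.
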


\begin{proof}
Let us consider a basis $e^1,\ldots,e^5$ for ${\mathfrak g}^*$ such
that the $SU(2)$-structure $(\eta,F_1,F_2,F_3)$ expresses
as~(\ref{structure}). In terms of $e^1,\ldots,e^5$ the equations of
the Lie algebra $\mathfrak g$ are of the form
\begin{equation}\label{gen-dif}
\left\{
\begin{array}{rcl}
d e^i \!\!&\!\!=\!\!&\!\! a_{i1}\, F_1 + a_{i2}\, F_2 + a_{i3}\, F_3
+ b_{i1}\, F^-_1 + b_{i2}\, F^-_2 + b_{i3}\, F^-_3\\[5pt]
&&
+ (c_{i1}\, e^1 + c_{i2}\, e^2 + c_{i3}\, e^3 + c_{i4}\, e^4)e^5,\\[7pt]
d e^5 \!\!&\!\!=\!\!&\!\! a (e^{12} - e^{34})+ b (e^{13} + e^{24}) +
c(e^{14} - e^{23}),
\end{array}
\right.
\end{equation}
for $i=1,\ldots,4$, where $a_{ij},b_{ij},c_{ij}\in \mathbb{R}$ and
$$
F^-_1 = e^{12} - e^{34},\qquad F^-_2 = e^{13} + e^{24}, \qquad F^-_3
= e^{14} - e^{23}.
$$
Let us denote by $F_i^{jkl}$ the component in $e^{jkl}$ of the
3-form $dF_i$. It is easy to see that
$$F_1^{125}=-c_{11}-c_{22},\quad F_2^{135}=-c_{11}-c_{33},\quad F_3^{145}=-c_{11}-c_{44},\quad
F_1^{345}=-c_{33}-c_{44},$$ which imply the vanishing of $c_{ii}$,
for $i=1,\ldots,4$. Moreover,
$$
\begin{array}{llllllll}
& F_1^{135}=-c_{23}+c_{41},\quad & F_1^{245}=c_{14}-c_{32},\quad
& F_2^{125}=-c_{32}-c_{41},\quad & F_2^{345}=c_{14}+c_{23}, \\[5pt]
& F_1^{145}=-c_{24}-c_{31},\quad & F_1^{235}=c_{13}+c_{42},\quad
& F_3^{125}=c_{31}-c_{42},\quad & F_3^{345}=-c_{13}+c_{24},\\[5pt]
& F_2^{145}=c_{21}-c_{34},\quad & F_2^{235}=-c_{12}+c_{43},\quad &
F_3^{135}=-c_{21}-c_{43},\quad & F_3^{245}=-c_{12}-c_{34},
\end{array}
$$
which imply the following equalities:
\begin{equation}\label{ces}
c_{41}=c_{23}=-c_{32}=-c_{14},\qquad
c_{42}=c_{31}=-c_{24}=-c_{13},\qquad c_{43}=-c_{34}=-c_{21}=c_{12}.
\end{equation}

Let $E_1,\ldots,E_5$ be the basis of $\mathfrak g$ dual to
$e^1,\ldots,e^5$, and let us denote by $P_{ijk}^l$ the component in
$E_l$ of $\bigl[[E_i,E_j],E_k\bigr]+ \bigl[[E_j,E_k],E_i\bigr]+
\bigl[[E_k,E_i],E_j\bigr]$, i.e.
$$
\bigl[[E_i,E_j],E_k\bigr]+ \bigl[[E_j,E_k],E_i\bigr]+
\bigl[[E_k,E_i],E_j\bigr]=\sum_{l=1}^5 P_{ijk}^l\, E_l
$$
It is clear that the Jacobi identity of the Lie algebra $\mathfrak
g$ is equivalent to $P_{ijk}^l=0$ for $1\leq i<j<k\leq 5$ and $1\leq
l\leq 5$. From the vanishing of $c_{ii}$ and (\ref{ces}), a direct
calculation shows that
$$P^5_{235}= -2 (b\, c_{12} - a\, c_{13}),\quad P^5_{245}= -2 (c\, c_{12} - a\, c_{14}),\quad
P^5_{345}= -2 (c\, c_{13} - b\, c_{14}).$$ Therefore, $P^5_{235}=P^5_{245}= P^5_{345}=0$
if and only if there is $\lambda\in
\mathbb{R}$ such that
\begin{equation}\label{lambda}
c_{12}=\lambda\, a,\qquad c_{13}=\lambda\, b,\qquad c_{14}=\lambda\,
c.
\end{equation}
Moreover,
\begin{equation}\label{jac-1}
\left\{
\begin{array}{rll}
\!\!\!&\!\!\! P^1_{125} + P^1_{345}= 2\lambda (a\, a_{21} + b\,
a_{31} + c\, a_{41}),\quad & P^2_{125} + P^2_{345}= -2\lambda (a\,
a_{11} + c\, a_{31} - b\,
a_{41}),\\[9pt]
\!\!\!&\!\!\! P^3_{125} + P^3_{345}= -2\lambda (b\, a_{11} - c\,
a_{21} + a\, a_{41} ),\quad & P^4_{125} + P^4_{345}= -2\lambda (c\,
a_{11} + b\, a_{21} - a\, a_{31}),
\end{array}
\right.
\end{equation}
\begin{equation}\label{jac-2}
\left\{
\begin{array}{rll}
\!\!\!&\!\!\! P^1_{135} - P^1_{245}= 2\lambda (a\, a_{22} + b\,
a_{32} + c\, a_{42}),\quad & P^2_{135} - P^2_{245}= -2\lambda (a\,
a_{12} + c\, a_{32} - b\,
a_{42}),\\[9pt]
\!\!\!&\!\!\! P^3_{135} - P^3_{245}= -2\lambda (b\, a_{12} - c\,
a_{22} + a\, a_{42} ),\quad & P^4_{135} - P^4_{245}= -2\lambda (c\,
a_{12} + b\, a_{22} - a\, a_{32}),
\end{array}
\right.
\end{equation}
\begin{equation}\label{jac-3}
\left\{
\begin{array}{rll}
\!\!\!&\!\!\! P^1_{145} + P^1_{235}= 2\lambda (a\, a_{23} + b\,
a_{33} + c\, a_{43}),\ \ & P^2_{145} + P^2_{235}= -2\lambda (a\,
a_{13} + c\, a_{33} - b\,
a_{43}),\\[9pt]
\!\!\!&\!\!\! P^3_{145} + P^3_{235}= -2\lambda (b\, a_{13} - c\,
a_{23} + a\, a_{43} ),\ \ & P^4_{145} + P^4_{235}= -2\lambda (c\,
a_{13} + b\, a_{23} - a\, a_{33}),
\end{array}
\right.
\end{equation}

If $\lambda\not=0$, since $a^2+b^2+c^2\not=0$, then it follows from
(\ref{jac-1}), (\ref{jac-2}) and (\ref{jac-3}) that $a_{11} = a_{21}
= a_{31} = a_{41} = 0$, $a_{12} = a_{22} = a_{32} = a_{42} = 0$ and
$a_{13} = a_{23} = a_{33} = a_{43} = 0$, respectively. Now, a direct
calculation similar to the given above shows that
$P^l_{235}=P^l_{245}=P^l_{345}=0$ for $l=1,\ldots,4$ if and only if
all the coefficients $b_{ij}$ are zero. But then
$P^3_{124}=\lambda(a^2+b^2+c^2)\not= 0$, that is to say, the Jacobi
identity is not satisfied. This proves that the coefficient
$\lambda$ in (\ref{lambda}) vanishes.

Since $\lambda=0$ then (\ref{ces}) and (\ref{lambda}) imply that all
the coefficients $c_{ij}$ in (\ref{gen-dif}) vanish, and therefore
the Lie algebra $\mathfrak g$ is an extension of a 4-dimensional Lie
algebra $\mathfrak a$ having a triple of 2-forms $F_1,F_2,F_3$
satisfying $F_i\wedge F_i=F_j\wedge F_j\not=0$ and $F_i\wedge F_j=0$
for $i\not= j$. This gives a hyperK\"ahler structure on $\mathfrak
a$ and it follows that the Lie algebra $\mathfrak a$ is necessarily
abelian. Therefore, all the coefficients $a_{ij},b_{ij},c_{ij}$
vanish, i.e. (\ref{gen-dif}) reduces to (\ref{diff-heisenberg}) and
$\mathfrak g$ is isomorphic to ${\mathfrak h}(2,1)$.
\end{proof}

From now on, we restrict our attention to the { three parametric } family of
$SU(2)$-structures $(\eta,F_1,F_2,F_3)$ given by
(\ref{diff-heisenberg})--(\ref{structure}). Let $\Gamma(2,1)$ denote
the subgroup of matrices of $H(2,1)$ with integer entries and
consider the compact nilmanifold $N(2,1)=\Gamma(2,1)\backslash
H(2,1)$. We can describe $N(2,1)$ as a principal circle bundle over
a $4$-torus
$$S^1 \hookrightarrow N(2,1) \to T^4,$$
by the projection $(x_1,y_1,x_2,y_2,z) \mapsto (x_1,y_1,x_2,y_2)$.
Since the $SU(2)$-structure given by
(\ref{diff-heisenberg})--(\ref{structure}) is left invariant, it
descends to an $SU(2)$-structure on the compact manifold $N(2,1)$
satisfying~(\ref{solstr}). { We denote it by $(N(2,1)_{a,b,c},\eta,F_1,F_2,F_3)$}.

Since the torsion 3-form $T$ of the $SU(2)$-structure is
\begin{equation}\label{toree}
T=\eta\wedge d\eta = a\, e^{125} + b\, e^{135} + c\, e^{145} - c\,
e^{235} + b\, e^{245} - a\, e^{345},
\end{equation}
we have using \eqref{diff-heisenberg} that
\begin{equation}\label{d-torsion}
dT= -2(a^2+b^2+c^2) e^{1234}.
\end{equation}
It is straightforward to check that $T$ is parallel with respect
to the torsion connection $\nabla^+$, i.e.
\begin{lemma}\label{parallel}
For any $a,b,c\in \mathbb{R}$ such that $a^2+b^2+c^2\not=0$, we have
$\nabla^+ T=0$.
\end{lemma}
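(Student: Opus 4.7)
The strategy is to factor $T=\eta\wedge d\eta$ (from \eqref{toree}) and reduce, via $\nabla^+\eta=0$ and the Leibniz rule, to the identity $\nabla^+d\eta=0$, which I then verify separately in horizontal and vertical directions with respect to the left-invariant frame $E_1,\dots,E_5=\xi$ dual to $e^1,\dots,e^5$.

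Structure equations \eqref{diff-heisenberg} give the only non-trivial brackets $[E_i,E_j]=-d\eta(E_i,E_j)\,\xi$ for $i,j\in\{1,2,3,4\}$, while $[\xi,E_i]=0$. For horizontal $i,j$, Koszul's formula on the left-invariant orthonormal basis yields $\nabla^g_{E_i}E_j=-\tfrac12 d\eta(E_i,E_j)\,\xi$, while $T=\eta\wedge d\eta$ gives $T(E_i,E_j,\,\cdot\,)=d\eta(E_i,E_j)\,\eta$. The two contributions to $\nabla^+_{E_i}E_j=\nabla^g_{E_i}E_j+\tfrac12 T(E_i,E_j,\,\cdot\,)^{\sharp}$ cancel exactly, so $\nabla^+_{E_i}E_j=0$; combined with $\nabla^+_{E_i}\xi=0$ (a consequence of $\nabla^+\xi=0$ from Theorem~\ref{shypo}), this shows that $\nabla^+_X$ annihilates the entire frame for every horizontal left-invariant $X$, and in particular $\nabla^+_X d\eta=0$.

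For $X=\xi$, the same Koszul computation gives $\nabla^+_\xi E_j=d\eta(E_j,\,\cdot\,)^{\sharp}$ on horizontal $E_j$, so the skew endomorphism $M:=\nabla^+_\xi|_{\mathbb H}$ has associated 2-form $\omega_M(Y,Z):=g(MY,Z)=d\eta(Y,Z)$. Since $d\eta$ is left-invariant, the Leibniz rule reduces $(\nabla^+_\xi d\eta)(Y,Z)$ to $-d\eta(MY,Z)-d\eta(Y,MZ)=-g(M^2Y,Z)-g(MY,MZ)$, which vanishes identically from $M^{T}=-M$: a skew operator always annihilates its own associated 2-form. Hence $\nabla^+d\eta=0$ in every direction, and $\nabla^+T=0$.

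The proof is routine once the factorization $T=\eta\wedge d\eta$ and the universal identity $M\cdot\omega_M=0$ for skew $M$ are in hand; the main place where care is needed is sign-matching between the Maurer--Cartan convention for $[E_i,E_j]$ in \eqref{diff-heisenberg} and the wedge convention for $T$.
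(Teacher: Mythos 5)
Your proof is correct. The paper itself offers no argument for Lemma~\ref{parallel} beyond the remark that the check is ``straightforward'', so there is no written proof to compare against; implicitly the authors' route is to read off the connection $1$-forms of $\nabla^+$ (all proportional to $e^5$, as one sees from the curvature forms \eqref{curvature}) and verify directly that $T$ is annihilated. Your computation reproduces exactly this: the Koszul formula on the left-invariant orthonormal frame gives $\nabla^g_{E_i}E_j=-\tfrac12 d\eta(E_i,E_j)\,\xi$ and $\nabla^g_\xi E_j=\tfrac12 d\eta(E_j,\cdot)^{\sharp}$, the torsion contributions are $+\tfrac12 d\eta(E_i,E_j)\,\xi$ and $+\tfrac12 d\eta(E_j,\cdot)^{\sharp}$ respectively, so $\nabla^+$ vanishes on the frame in horizontal directions and acts in the $\xi$-direction by the skew operator $M$ with $g(MY,Z)=d\eta(Y,Z)$ --- all consistent with \eqref{curvature}. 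The one genuinely clean idea you add is packaging the $\xi$-direction check as the universal identity $-\omega_M(MY,Z)-\omega_M(Y,MZ)=-g(M^2Y,Z)-g(MY,MZ)=0$ for skew $M$, which together with $M\xi=0$ and $\nabla^+\eta=0$ kills $\nabla^+_\xi(\eta\wedge d\eta)$ without ever expanding the six terms of \eqref{toree}; this is a slightly more conceptual, and arguably more illuminating, version of the brute-force verification the paper has in mind. No gaps.
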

On the other hand, using \eqref{pmcon} and the expression \eqref{toree}, we calculate
the non-zero curvature forms
$(\Omega^+)^i_j=-(\Omega^+)^j_i$ of the torsion connection are
determined by:
\begin{equation}\label{curvature}
\begin{array}{l}
(\Omega^+)^1_2 = -(\Omega^+)^3_4 = -a\, de^5,\quad (\Omega^+)^1_3=
(\Omega^+)^2_4= -b\, de^5,\quad (\Omega^+)^1_4 = -(\Omega^+)^2_3=
-c\, de^5.
\end{array}
\end{equation}

Next we find a large family of $SU(2)$-instantons { depending on three (real) parameters} .

\begin{prop}\label{instanton}
Let $A_{\lambda,\mu,\tau}$ be the linear connection on $N(2,1)$
defined by the connection forms
$$
\begin{array}{l}
(\sigma^{A_{\lambda,\mu,\tau}})^1_2 = -
(\sigma^{A_{\lambda,\mu,\tau}})^2_1= -(\sigma^{A_{\lambda,\mu,\tau}})^3_4
=(\sigma^{A_{\lambda,\mu,\tau}})^4_3 = -\lambda\, e^5,\\[8pt]
(\sigma^{A_{\lambda,\mu,\tau}})^1_3 = -
(\sigma^{A_{\lambda,\mu,\tau}})^3_1= (\sigma^{A_{\lambda,\mu,\tau}})^2_4 = -
(\sigma^{A_{\lambda,\mu,\tau}})^4_2 = -\mu\, e^5,\\[8pt]
(\sigma^{A_{\lambda,\mu,\tau}})^1_4 = -
(\sigma^{A_{\lambda,\mu,\tau}})^4_1= -(\sigma^{A_{\lambda,\mu,\tau}})^2_3
=(\sigma^{A_{\lambda,\mu,\tau}})^3_2 = -\tau\, e^5,
\end{array}
$$
and $(\sigma^{A_{\lambda,\mu,\tau}})^i_j=0$ for the remaining
$(i,j)$, where $\lambda,\mu,\tau\in \mathbb{R}$. Then,
$A_{\lambda,\mu,\tau}$ is an $SU(2)$-instanton with respect to any
of the $SU(2)$-structures $(\eta,F_1,F_2,F_3)$ given by
$(\ref{diff-heisenberg})$--$(\ref{structure})$,
$A_{\lambda,\mu,\tau}$ preserves the metric, and its first
Pontrjagin form is given by
$$p_1(A_{\lambda,\mu,\tau})=-\frac{(\lambda^2+\mu^2+\tau^2)(a^2+b^2+c^2)}{2\pi^2} e^{1234}.$$
\end{prop}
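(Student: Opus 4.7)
The plan is to exploit the fact that every connection one-form $(\sigma^{A_{\lambda,\mu,\tau}})^i_j$ is a scalar multiple of the single one-form $e^5$. This gives metric preservation for free, since by construction $(\sigma^{A_{\lambda,\mu,\tau}})^i_j=-(\sigma^{A_{\lambda,\mu,\tau}})^j_i$. It also kills the quadratic piece of the curvature: each $(\sigma^{A_{\lambda,\mu,\tau}})^i_k\wedge(\sigma^{A_{\lambda,\mu,\tau}})^k_j$ is proportional to $e^5\wedge e^5=0$, so $\Omega^{A_{\lambda,\mu,\tau}}=d\sigma^{A_{\lambda,\mu,\tau}}$. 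In particular every nonzero curvature form is a scalar multiple of $de^5$, which by \eqref{diff-heisenberg} can be rewritten as $de^5=aF_1^-+bF_2^-+cF_3^-$, a genuinely $\mathbb{H}$-anti-self-dual 2-form (this is just the statement $*_{\mathbb H}d\eta=-d\eta$ in the family).

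Next I verify the $SU(2)$-instanton conditions \eqref{25}. The key observation is that any $\mathbb H$-anti-self-dual 2-form on the 4-dimensional horizontal distribution is automatically a primitive $(1,1)$-form with respect to the almost complex structure $\psi$ determined by $F_1$: $\psi$-invariance corresponds to lying in the $(1,1)$-part, and primitivity $\sum_k \omega(E_k,\psi E_k)=0$ is precisely trace-freeness against the self-dual form $F_1$, which is orthogonal to the anti-self-dual space. Both parts of \eqref{25} therefore hold for $de^5$ and, by linearity, for every $(\Omega^{A_{\lambda,\mu,\tau}})^m_n$. Since this argument uses only that $de^5$ is $\mathbb H$-anti-self-dual and does not otherwise depend on $(a,b,c)$, the connection $A_{\lambda,\mu,\tau}$ is an instanton with respect to each $SU(2)$-structure in the family $(\ref{diff-heisenberg})$–$(\ref{structure})$.

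Finally I compute the first Pontrjagin form. The anti-self-dual basis satisfies $F_p^-\wedge F_q^-=-2\delta_{pq}\,e^{1234}$, hence
\begin{equation*}
de^5\wedge de^5=-2(a^2+b^2+c^2)\,e^{1234}.
\end{equation*}
Listing the six nonzero $(\Omega^{A_{\lambda,\mu,\tau}})^i_j$ with $i<j$, namely $-\lambda\,de^5,\,-\mu\,de^5,\,-\tau\,de^5$ on $(1,2),(1,3),(1,4)$ and $\tau\,de^5,\,-\mu\,de^5,\,\lambda\,de^5$ on $(2,3),(2,4),(3,4)$, and squaring, one gets $\sum_{i<j}(\Omega^{A_{\lambda,\mu,\tau}})^i_j\wedge(\Omega^{A_{\lambda,\mu,\tau}})^i_j=2(\lambda^2+\mu^2+\tau^2)\,de^5\wedge de^5$. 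Dividing by $8\pi^2$ yields the claimed expression. The main obstacle is conceptual rather than computational: recognising the instanton condition \eqref{25} as $\mathbb H$-anti-self-duality; once that reduction is made everything else is elementary bookkeeping.
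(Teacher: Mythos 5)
Your proof is correct and follows essentially the same route as the paper: the quadratic term in the curvature dies because all connection forms are proportional to $e^5$, each $(\Omega^{A_{\lambda,\mu,\tau}})^i_j$ is then a constant multiple of the $\mathbb H$-anti-self-dual form $de^5=aF_1^-+bF_2^-+cF_3^-$, condition \eqref{25} follows (the paper checks it by writing out $\psi(E_1)=-E_2$, $\psi(E_3)=-E_4$ explicitly, while you invoke the equivalent fact that $\mathbb H$-anti-self-dual horizontal $2$-forms are exactly the primitive $(1,1)$-forms), and the Pontrjagin form is obtained from $F_p^-\wedge F_q^-=-2\delta_{pq}e^{1234}$. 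Your write-up is in fact more detailed than the paper's, which dismisses the curvature and $p_1$ computations as ``a direct calculation''; all your signs and coefficients check out against \eqref{d-torsion} and the stated formula for $p_1(A_{\lambda,\mu,\tau})$.
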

\begin{proof}
A direct calculation shows that the non-zero curvature forms
$(\Omega^{A_{\lambda,\mu,\tau}})^i_j$ of the connection
$A_{\lambda,\mu,\tau}$ are:
$$
\begin{array}{l}
(\Omega^{A_{\lambda,\mu,\tau}})^1_2 =-
(\Omega^{A_{\lambda,\mu,\tau}})^2_1= -(\Omega^{A_{\lambda,\mu,\tau}})^3_4
=(\Omega^{A_{\lambda,\mu,\tau}})^4_3 = -\lambda\, de^5,\\[8pt]
(\Omega^{A_{\lambda,\mu,\tau}})^1_3 = -
(\Omega^{A_{\lambda,\mu,\tau}})^3_1 =(\Omega^{A_{\lambda,\mu,\tau}})^2_4 = -
(\Omega^{A_{\lambda,\mu,\tau}})^4_2 = -\mu\, de^5,\\[8pt]
(\Omega^{A_{\lambda,\mu,\tau}})^1_4= -
(\Omega^{A_{\lambda,\mu,\tau}})^4_1= -(\Omega^{A_{\lambda,\mu,\tau}})^2_3=
(\Omega^{A_{\lambda,\mu,\tau}})^3_2 = -\tau\, de^5.
\end{array}
$$
Hence $A_{\lambda,\mu,\tau}$ satisfies (\ref{25}); in fact, since
$F_1=e^{12}+e^{34}$, the almost complex structure $\psi$ is given by
$\psi(E_1)=-E_2$, $\psi(E_3)=-E_4$. Therefore, the connection
$A_{\lambda,\mu,\tau}$ is an $SU(2)$-instanton.
\end{proof}
The following results give explicit compact valid solutions on
$N(2,1)$ ({ depending on six real parameters)} to the heterotic supersymmetry equations with non-zero flux
and constant dilaton satisfying the anomaly cancellation condition
with respect to $\nabla^+$ and the Levi-Civita connection
$\nabla^g$ and show that all our solutions for $\nabla^+$ also solve
the equations of motion.

\begin{thrm}\label{N(2,1)}
Let $(N(2,1)_{a,b,c},\eta,F_1,F_2,F_3)$ be a compact $SU(2)$-nilmanifold as above, $\nabla^+$
the torsion connection and $A_{\lambda,\mu,\tau}$ the
$SU(2)$-instanton given in Proposition~\ref{instanton}. Let
$(\lambda,\mu,\tau)\not=(0,0,0)$ be such that
$\lambda^2+\mu^2+\tau^2 < a^2+b^2+c^2$; then
$$
dT= 2\pi^2 \alpha' \, (p_1(\nabla^+)- p_1(A_{\lambda,\mu,\tau})),
$$
where $\alpha' = 2(a^2+b^2+c^2-\lambda^2-\mu^2-\tau^2)^{-1} >0$.

Therefore, the nilmanifold  $((N(2,1)_{a,b,c},\eta,F_1,F_2,F_3),A_{\lambda,\mu,\tau},\nabla^+)$ is a compact
solution to the supersymmetry equations \eqref{sup1} obeying the anomaly cancellation
\eqref{acgen} and solving the equations of motion \eqref{mot} in dimension 5.

Denote  $r=a^2+b^2+c^2$, the Riemannian metric can be expressed  locally  by
\begin{itemize}
\item[i)]  If $(a,b)\not=(0,0)$ then
$$ g =  r^2 (dx_1)^2 + r^2 (dx_2)^2 + r (dy_1)^2
                                  + r (a^2+b^2) (dy_2)^2
                                  + r^4 (x_1 dy_1 + x_2 dy_2 - dz)^2,$$
 \item[ii)] If $a=b=0$ then
 $$ g =  (dx_1)^2 + (dx_2)^2 + (dy_1)^2 + (dy_2)^2
                                  + c^2 (x_1 dy_1 + x_2 dy_2 - dz)^2.
$$
\end{itemize}

\end{thrm}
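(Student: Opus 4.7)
The plan is to verify each of the four conditions a)--d) of Section \ref{cdil} for the triple $(N(2,1)_{a,b,c},A_{\lambda,\mu,\tau},\nabla^+)$ in turn. Conditions a) and b) are already in hand: the $SU(2)$-structure \eqref{structure} on $N(2,1)_{a,b,c}$ satisfies \eqref{solstr} by construction, so Theorem \ref{shypo} gives a solution to the first two equations of \eqref{sup1} with constant dilaton, and the gaugino equation holds by Proposition \ref{instanton}. It therefore remains to check the anomaly cancellation \eqref{ac5} with the prescribed positive $\alpha'$, deduce the equations of motion \eqref{mot}, and compute the metric in coordinates.

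For the anomaly cancellation I would first compute $p_1(\nabla^+)$ directly from \eqref{curvature}. Every nonvanishing $(\Omega^+)^i_j$ is a scalar multiple of $de^5$; the $i<j$ multipliers are $-a,-b,-c,-c,-b,a$, whose squares sum to $2(a^2+b^2+c^2)$, hence
\begin{equation*}
p_1(\nabla^+)=\frac{1}{8\pi^2}\sum_{i<j}(\Omega^+)^i_j\wedge(\Omega^+)^i_j=\frac{a^2+b^2+c^2}{4\pi^2}\,(de^5)\wedge(de^5).
\end{equation*}
Since $dT=(de^5)\wedge(de^5)=-2(a^2+b^2+c^2)\,e^{1234}$ by \eqref{d-torsion}, this yields $p_1(\nabla^+)=-\frac{(a^2+b^2+c^2)^2}{2\pi^2}\,e^{1234}$. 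Combining with the formula for $p_1(A_{\lambda,\mu,\tau})$ in Proposition \ref{instanton}, I get
\begin{equation*}
p_1(\nabla^+)-p_1(A_{\lambda,\mu,\tau})=-\frac{(a^2+b^2+c^2)\bigl(a^2+b^2+c^2-\lambda^2-\mu^2-\tau^2\bigr)}{2\pi^2}\,e^{1234}.
\end{equation*}
Equating $dT$ with $2\pi^2\alpha'$ times this and solving gives $\alpha'=2/(a^2+b^2+c^2-\lambda^2-\mu^2-\tau^2)$, which is positive precisely under the hypothesis $\lambda^2+\mu^2+\tau^2<a^2+b^2+c^2$.

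The heterotic equations of motion \eqref{mot} with $R=R^+$ then come for free from Corollary \ref{thacp}: Lemma \ref{parallel} supplies the required $\nabla^+T=0$, and combined with the supersymmetry \eqref{sup1} and the anomaly cancellation just established, the corollary yields \eqref{mot}. Note that invoking Corollary \ref{thacp} sidesteps having to verify the quadratic identity \eqref{supmot} of Theorem \ref{thac} by hand — this is the step I expect to be the main potential obstacle if one wished to avoid the shortcut, since the direct curvature contraction would be lengthy.

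Finally, the coordinate form of $g$ is obtained by squaring the left-invariant coframe $(e^1,\ldots,e^5)$ given at the beginning of Section 4 and summing. The nontrivial algebraic step is checking that in $(e^2)^2+(e^3)^2+(e^4)^2$ the cross terms among $dy_1$, $dx_2$, $dy_2$ cancel; this follows from the orthogonality built into the coefficients $(a,br/(a^2+b^2),-ac)$, $(b,-ar/(a^2+b^2),-bc)$, $(c,0,a^2+b^2)$ viewed as three vectors in $\mathbb{R}^3$. The split into the two cases reflects the singular denominator $a^2+b^2$ in the construction of $e^2,e^3$: when $a=b=0$ the Heisenberg relation degenerates to $de^5=c(e^{14}-e^{23})$ and the coframe must be replaced by the simpler flat one $e^i=dx_i$ or $dy_i$, producing the flat product base.
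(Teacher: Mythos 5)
Your argument is correct and is essentially the paper's own proof: the paper likewise computes $p_1(\nabla^+)=-\tfrac{(a^2+b^2+c^2)^2}{2\pi^2}e^{1234}$ from \eqref{curvature}, combines it with \eqref{d-torsion} and Proposition~\ref{instanton} to fix $\alpha'$, and obtains the equations of motion by citing Lemma~\ref{parallel} together with Corollary~\ref{thacp} rather than checking \eqref{supmot} directly. Your extra sketch of the coordinate form of $g$ goes beyond what the paper writes; just note that the cross terms vanish because the \emph{columns} of the coefficient matrix of $(e^2,e^3,e^4)$ in $(dy_1,dx_2,dy_2)$ are pairwise orthogonal (the row vectors you list are not orthogonal in general).
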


\begin{proof}
The non-zero curvature forms of the torsion connection $\nabla^+$
are given by \eqref{curvature}, which implies that its first
Pontrjagin form is
$$
p_1(\nabla^+)=  - \frac{(a^2 + b^2 + c^2)^2}{2 \pi^2} e^{1234}.
$$
Now the proof follows directly from \eqref{d-torsion} and
Proposition~\ref{instanton}. The final assertion in the theorem
follows from Lemma~\ref{parallel} and Corollary~\ref{thacp}.
\end{proof}
\begin{prop}\label{N(2,1)-LC}
Let $(N(2,1)_{a,b,c},\eta,F_1,F_2,F_3)$ be a compact $SU(2)$-nilmanifold as above, $\nabla^g$
the Levi-Civita connection and $A_{\lambda,\mu,\tau}$ the
$SU(2)$-instanton given in Proposition~\ref{instanton}. Let
$(\lambda,\mu,\tau)\not=(0,0,0)$ be such that
$\lambda^2+\mu^2+\tau^2 < \frac38(a^2+b^2+c^2)$; then
$$
dT= 2\pi^2 \alpha' \, (p_1(\nabla^g)- p_1(A_{\lambda,\mu,\tau})),
$$
where $\alpha' = 16 \left( 3(a^2+b^2+c^2)-8(\lambda^2+\mu^2+\tau^2)
\right)^{-1}
>0$.

Therefore, the nilmanifold  $((N(2,1)_{a,b,c},\eta,F_1,F_2,F_3),A_{\lambda,\mu,\tau},\nabla^g)$ is a compact
solution to the supersymmetry equations \eqref{sup1} satisfying  the anomaly cancellation
condition \eqref{acgen}.
\end{prop}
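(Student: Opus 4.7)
The strategy parallels that of Theorem~\ref{N(2,1)}: from \eqref{d-torsion} we have $dT = -2(a^2+b^2+c^2)\,e^{1234}$, and from Proposition~\ref{instanton} we have $p_1(A_{\lambda,\mu,\tau}) = -\frac{(\lambda^2+\mu^2+\tau^2)(a^2+b^2+c^2)}{2\pi^2}\,e^{1234}$, so once $p_1(\nabla^g)$ is in hand the anomaly cancellation condition \eqref{acgen} reduces to a single scalar equation for $\alpha'$. Writing $r^2=a^2+b^2+c^2$ and $\rho=\lambda^2+\mu^2+\tau^2$, the argument boils down to proving $p_1(\nabla^g) = -\frac{3\,r^4}{16\pi^2}\,e^{1234}$; direct substitution will then give $\alpha'=16(3r^2-8\rho)^{-1}$, positive precisely under the stated inequality.

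The first step is to extract the Levi-Civita connection 1-forms relative to the orthonormal left-invariant coframe $e^1,\ldots,e^5$. The only nontrivial structure constants are $c^5_{pq}$ with $p,q\in\{1,\ldots,4\}$; collecting them into a skew matrix $A=(A_{pq})$ whose entries are linear in $a,b,c$, the Koszul formula yields $(\omega^g)^p_q = -\tfrac12 A_{qp}\,e^5$ on the horizontal block, $(\omega^g)^5_q = \tfrac12\sum_{i=1}^4 A_{iq}\,e^i$ on the mixed block, and their skew partners. The key algebraic input, which drives the whole computation, is the quaternionic identity $A^2 = -r^2\,I_4$ (equivalently, the 2-form $\sum_{p<q}A_{pq}\,e^{pq}$ equals $-de^5$), a direct consequence of the $SU(2)$-structure \eqref{structure}.

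Next I compute $\Omega^g = d\omega^g + \omega^g\wedge\omega^g$ block by block. The mixed entries $(\Omega^g)^p_5$ collapse, via $A^2=-r^2 I$, to multiples of $e^p\wedge e^5$ and therefore contribute nothing to $\sum_{i<j}(\Omega^g)^i_j\wedge(\Omega^g)^i_j$. On the horizontal block, $(\Omega^g)^p_q = P^p_q + Q^p_q$ with $P^p_q = -\tfrac12 A_{qp}\,de^5$ and $Q^p_q = -\tfrac14\,A_p^\flat\wedge A_q^\flat$, where $A_p^\flat$ denotes the 1-form $\sum_i A_{ip}\,e^i$. A short calculation using $(de^5)^2=-2r^2\,e^{1234}$ and $\sum_{p<q}A_{pq}^2 = 2r^2$ yields $\sum_{p<q} P^p_q\wedge P^p_q = -r^4\,e^{1234}$, and $\sum_{p<q} Q^p_q\wedge Q^p_q = 0$ term-by-term because each summand contains $A_p^\flat$ twice.

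The delicate step, and the main obstacle, is the cross term $2\sum_{p<q}P^p_q\wedge Q^p_q$. I plan to evaluate it via the auxiliary identity $\sum_{p<q} A_{pq}\,A_p^\flat\wedge A_q^\flat = -r^2\,de^5$ as a 2-form on the base, which follows by expanding the wedge, antisymmetrizing in $(p,q)$, and applying $A^2=-r^2 I$ twice to collapse the four-fold $A$-product. Wedging with $de^5$ then yields $2\sum_{p<q}P^p_q\wedge Q^p_q = -\tfrac{r^4}{2}\,e^{1234}$, and combining the three pieces gives $p_1(\nabla^g) = -\tfrac{3r^4}{16\pi^2}\,e^{1234}$. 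Substituting into the anomaly condition produces $\alpha' = 16(3r^2-8\rho)^{-1}$, which is positive exactly when $\rho<\tfrac38 r^2$. Since the supersymmetry equations \eqref{sup1} have already been verified in Theorem~\ref{left-inv-solutions} and Proposition~\ref{instanton}, this completes the argument.
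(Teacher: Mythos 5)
Your proposal is correct and takes essentially the same route as the paper: compute the Levi-Civita curvature forms of the left-invariant metric, deduce $p_1(\nabla^g)=-\tfrac{3}{16\pi^2}(a^2+b^2+c^2)^2\,e^{1234}$, and solve the resulting scalar equation for $\alpha'$ using \eqref{d-torsion} and Proposition~\ref{instanton}. The paper simply records the six horizontal curvature $2$-forms explicitly, whereas you organize the same computation via the skew matrix $A$ with $A^2=-(a^2+b^2+c^2)I_4$ and the $P+Q$ splitting; your intermediate identities (the value of $(de^5)^2$, the vanishing of the mixed and $Q\wedge Q$ contributions, and the cross term $-\tfrac12(a^2+b^2+c^2)^2 e^{1234}$) all agree with the paper's explicit formulas.
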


\begin{proof}
The non-zero curvature forms $(\Omega^g)^i_j=-(\Omega^g)^j_i$ of the
Levi-Civita connection $\nabla^g$ are determined by $(\Omega^g)^i_5=\frac14 (a^2+b^2+c^2) e^{i5}$,
for $i=1,\ldots,4$, and
$$
\begin{array}{l}
(\Omega^g)^1_2 = - \frac{3a}{4} de^5 - \frac14 (a^2+b^2+c^2)
e^{34},\qquad (\Omega^g)^1_3 = - \frac{3b}{4} de^5 + \frac14
(a^2+b^2+c^2) e^{24},\\[8pt]
(\Omega^g)^1_4 = - \frac{3c}{4} de^5 - \frac14 (a^2+b^2+c^2)
e^{23},\qquad (\Omega^g)^2_3 = \frac{3c}{4} de^5 - \frac14
(a^2+b^2+c^2) e^{14},\\[8pt] (\Omega^g)^2_4 = - \frac{3b}{4} de^5 + \frac14
(a^2+b^2+c^2) e^{13},\qquad (\Omega^g)^3_4 = \frac{3a}{4} de^5 -
\frac14 (a^2+b^2+c^2) e^{12}.
\end{array}
$$
This implies that the first Pontrjagin form of $\nabla^g$ is
$$
p_1(\nabla^g)= -\frac{3}{16 \pi^2} (a^2+b^2+c^2)^2 e^{1234}.
$$
Now the proof follows directly from \eqref{d-torsion} and
Proposition~\ref{instanton}.
\end{proof}

\begin{rmrk}
The first Pontrjagin form of the connection $\nabla^-$ is zero,
therefore there is no compact solution to the heterotic supersymmetry
equations satisfying the anomaly cancellation condition  with $\nabla=\nabla^-$.
\end{rmrk}

\section{Conclusions}

We have constructed new explicit compact supersymmetric solutions with non-zero NS 3-form
field strength, non-flat instanton and constant dilaton to the
heterotic string equations with non-trivial bosonic fields  on a five dimensional manifold.
The solutions are compact 5-dimensional nilmanifolds $N(2,1)_{a,b,c}$ which are $S^1$-bundles over
a four torus $T^4$ equipped with anti-self-dual connection { whose curvature depends on three real constants $a,b,c$,}
satisfying the heterotic supersymmetry
equations \eqref{sup1} with non-zero NS 3-form $H$, non-trivial instanton $A_{\lambda,\mu,\tau}$
depending on { another} three { real parameters}  $\lambda,\mu, \tau$, and a constant dilaton obeying the
three-form Bianchi identity  \eqref{acgen} with curvature term taken with respect to either  $R=R^+$ or $R=R^g$.

We have analyzed whether the
heterotic supersymmetry equations together with the three form Bianchi identity with non-trivial curvature
term $TrR\wedge R$ imply the equations of motion (the Einstein equation of motion)
in dimension $d=5$. We have found a quadratic condition on the curvature $R$ which is necessary
and sufficient for the heterotic supersymmetry and the anomaly cancellation to imply
 the heterotic equations of motion \eqref{mot} in dimension five. Based on that we
achieved the heterotic equations of motion are satisfied for curvature term taken with respect to $R^+$ showing
that $R^+$ is an $SU(2)$-instanton on $N(2,1)_{a,b,c}$ and therefore the quadratic curvature condition is fulfilled.

{ Our { six-parametric} solutions can be viewed as examples of a (good) half-symmetric solutions to the heterotic supergravity, i.e.
heterotic solutions with 8 supersymmetries preserved, for which $Hol(\nabla^+)\subseteq
SU(2)$ according to the general scheme developed in \cite{Pap},
(see also \cite{GLP,GPRS}). Since $N(2,1)_{a,b,c}$ is compact, it might  be considered as the vacuum
of compactifications with fluxes to 5 dimensions.}

We have given structure equations of any solution to the first two Killing spinor equations
in \eqref{sup1}  in terms of exterior derivatives of an
$SU(2)$-structure in dimension five, a notion introduced in
\cite{ConS,GGMPR}, and express its Ricci tensor in terms of the
structure's forms. Mathematically these structures were known as quasi-Sasaki not Sasaki
5-manifolds with anti-self-dual exterior derivative of the almost contact form
and their special conformal transformations. In the case when the
quasi-Sasaki structure is regular the solutions to the first two
equations in \eqref{sup1} are $S^1$-bundles over a Calabi-Yau
4-manifold with anti-self-dual curvature 2-form. Any solution with non-constant dilaton
 arises from a solution with constant dilaton via special conformal transformations and
 the dilaton depends only on the Calabi-Yau base (cf also \cite{FI,FI2,Pap}).

An open problem is whether the more general case of solution can occur. Namely, does there exist a  compact non-regular (the
integral curves of the Reeb vector fielf $\xi$ are not closed)  quasi-Sasaki 5-manifold
with anti-self-dual 2-form $d\eta$, or more precisely,  non-regular $SU(2)$-
structures obeying \eqref{solstr} (\eqref{solstr1} in the case of non-constant dilaton) on a compact 5-manifold?
We do not know any examples of this kind. Such examples might  also be relevant in
constructing compact heterotic solutions  in dimension six since the
construction of $\mathbb T^2$-bundles over Calabi-Yau surface
presented in \cite{GP} can be generalized to a circle bundle over
such an example solving automatically the first two equations in
\eqref{sup1} (see e.g. \cite{FIUV1}).

\medskip
\noindent {\bf Acknowledgments.}  We would like to thank the referee for his valuable comments and remarks.
This work has been partially
supported through grants MEC (Spain) MTM2005-08757-C04-02,
MTM2008-06540-C02/01-02 and under project ``Ingenio Mathematica
(i-MATH)" No. CSD2006-00032 (Consolider Ð Ingenio 2010). S.I. is
partially supported by the Contract 082/2009 with the University
of Sofia `St.Kl.Ohridski' and Contract ``Idei", DO
02-257/18.12.2008. S.I. is a Senior Associate to the Abdus Salam
ICTP, Trieste and the final stage of the research was done during
his stay in the ICTP, Fall 2008.

\end{document}